\documentclass[a4paper, 11pt]{amsart} 

\usepackage[letterpaper,margin=1in]{geometry}
\usepackage{etex}
\usepackage{amsmath,amssymb,amsthm,amsfonts,mathrsfs, mathtools}
\usepackage[frame,cmtip,arrow,matrix,line,graph,curve]{xy}
\usepackage{graphpap,color,paralist,pstricks}
\usepackage[mathscr]{eucal}
\usepackage{mathabx}
\usepackage[pdftex,colorlinks,backref=page,citecolor=blue]{hyperref}
\usepackage{tikz}
\usetikzlibrary{calc,decorations.markings}
\usepackage{epic,eepic}
\usepackage{yfonts}
\usepackage{enumitem} 
\usepackage{bbm}
\usepackage{tikz-cd}
\usepackage{aliascnt}
\usepackage{mathbbol}

\allowdisplaybreaks

\usepackage{xcolor, color, soul}

\usepackage{color}

\renewcommand{\leq}{\leqslant}
\renewcommand{\geq}{\geqslant}

\newtheorem{theorem}{Theorem}[section]

\newaliascnt{headcor}{headthm}

\aliascntresetthe{headcor}

\newaliascnt{headconj}{headthm}

\aliascntresetthe{headconj}

\newaliascnt{corollary}{theorem}
\newtheorem{corollary}[corollary]{Corollary}
\aliascntresetthe{corollary}

\newaliascnt{claim}{theorem}

\aliascntresetthe{claim}

\newaliascnt{lemma}{theorem}
\newtheorem{lemma}[lemma]{Lemma}
\aliascntresetthe{lemma}

\newaliascnt{thmdfn}{theorem}
\newtheorem{thmdfn}[thmdfn]{Theorem/Definition}
\aliascntresetthe{thmdfn}

\newaliascnt{conjecture}{theorem}

\aliascntresetthe{conjecture}

\newaliascnt{proposition}{theorem}
\newtheorem{proposition}[proposition]{Proposition}
\aliascntresetthe{proposition}

\theoremstyle{definition}
\newaliascnt{definition}{theorem}
\newtheorem{definition}[definition]{Definition}
\aliascntresetthe{definition}

\newaliascnt{notation}{theorem}

\aliascntresetthe{notation}

\newaliascnt{example}{theorem}
\newtheorem{example}[example]{Example}
\aliascntresetthe{example}

\newaliascnt{examples}{theorem}

\aliascntresetthe{examples}

\newaliascnt{remark}{theorem}
\newtheorem{remark}[remark]{Remark}
\aliascntresetthe{remark}

\newaliascnt{fact}{theorem}

\aliascntresetthe{fact}

\newaliascnt{question}{theorem}

\aliascntresetthe{question}

\newaliascnt{questions}{theorem}

\aliascntresetthe{questions}

\newaliascnt{problem}{theorem}

\aliascntresetthe{problem}

\newaliascnt{construction}{theorem}

\aliascntresetthe{construction}

\newaliascnt{setup}{theorem}

\aliascntresetthe{setup}

\newaliascnt{algorithm}{theorem}

\aliascntresetthe{algorithm}

\newaliascnt{observation}{theorem}

\aliascntresetthe{observation}

\newaliascnt{discussion}{theorem}

\aliascntresetthe{discussion}

\newaliascnt{defprop}{theorem}

\aliascntresetthe{defprop}

\def\sectionautorefname~#1\null{Section #1\null}
\def\subsectionautorefname~#1\null{\S #1\null}

\newcommand{\widetildespan}[1]{%
	\tikz[baseline=(T.base)]{
		\node[inner sep=0pt](T){$\displaystyle#1$};
		\draw[line width=0.4pt]
		([yshift=2pt]T.north west)
		.. controls ([yshift=7pt,  xshift= 5pt]T.north west)
		and     ([yshift=2pt,  xshift=-5pt]T.north)
		.. ([yshift=5pt]T.north)
		.. controls ([yshift=8pt,  xshift= 5pt]T.north)
		and     ([yshift=3pt,  xshift=-5pt]T.north east)
		.. ([yshift=6pt]T.north east);
	}%
}

\newcommand{\Mat}{\mathrm{Mat}}

\DeclareMathOperator{\rk}{rk} %
\DeclareMathOperator{\sym}{sym} %
\DeclareMathOperator{\Supp}{Supp} %

\newcommand{\R}{\mathbb{R}} %
\newcommand{\Z}{{\mathbb{Z}}} %
\newcommand{\N}{\mathbb{Z}_{\geq 0}} %

\title[Matroid correspondence]{Matroid correspondence}
\author[Crowley, Ding, Hu, Kumar, Montaño,  Nguy$\tilde{\text{\^E}}$n, Shapiro, Solomon, Wang, Whidden]{%
	Colin Crowley, Changxin Ding, Haoxi Hu, Arvind Kumar, Jonathan Montaño,
	Th\'ai Th\`anh Nguy$\tilde{\text{\^E}}$n, Anna Shapiro, Noah Solomon, Botong Wang, and Juliet Whidden}

\address{Colin Crowley -- Department of Mathematics, University of Oregon}
\email{crowley@uoregon.edu}

\address{Changxin Ding -- School of Mathematics, Georgia Institute of Technology}
\email{cding66@gatech.edu}

\address{Haoxi Hu -- Department of Mathematics, Tulane University}
\email{hhu5@tulane.edu}

\address{Arvind Kumar -- Department of Mathematical Sciences, New Mexico State University}
\email{arvkumar@nmsu.edu}

\address{Jonathan Montaño -- School of Mathematical and Statistical Sciences, Arizona
	State University}
\email{montano@asu.edu}

\address{Th\'ai Th\`anh Nguy$\tilde{\text{\^E}}$n -- Department of Mathematics, University of Dayton}
\email{tnguyen5@udayton.edu}

\address{Anna Shapiro -- Department of Mathematics, North Carolina State University}
\email{arshapi4@ncsu.edu}

\address{Noah Solomon -- School of Mathematics, Georgia Institute of Technology}
\email{noah.solomon@math.gatech.edu}

\address{Botong Wang -- Department of Mathematics, University of Wisconsin--Madison}
\email{wang@math.wisc.edu}

\address{Juliet Whidden -- School of Mathematics, Georgia Institute of Technology}
\email{jwhidden6@gatech.edu}

\begin{document}

	\begin{abstract}
		Motivated by algebraic correspondences and linear operators associated with volume and Lorentzian polynomials, we introduce matroid correspondences and their polymatroid analogues. A matroid correspondence defines a functor between poset categories of matroids whose morphisms are matroid quotients, and various standard functors, including deletion, contraction, free extension, truncation, intersection,  union, and pullback, arise in this way. We show that these correspondences preserve representability and algebraicity under natural hypotheses. In the polymatroid setting, we establish compatibility with multisymmetric lifts. Finally, we relate this construction to the supports of linear operators with Lorentzian symbols.
	\end{abstract}

	%\keywords{}
	%\subjclass[2020]{Primary:  Secondary: }
	
	\maketitle
	
	\section{Introduction}
	
	Motivated by the recent work on realization problems and Lorentzian/volume polynomials, see e.g. \cite{BH20, multidegrees, realization, GHM+25}, we observe that an irreducible subvariety of a product of projective spaces determines a realizable volume polynomial, and taking support gives rise to an algebraic polymatroid. Thus, we have a diagram
	\[
	\{\text{subvarieties of } \prod_i \mathbb{P}^{n_i}\}
	\longrightarrow
	\{\text{volume polynomials}\}
	\longrightarrow
	\{\text{polymatroids}\},
	\]
	where in the first arrow we forget the geometry of the variety and only remember its homology class, and in the second arrow we forget the coefficients of the polynomial and only remember its support. 
	
	In \cite{GHM+25}, linear operators of volume polynomials were studied extensively. Geometrically, linear operators on volume polynomials are the analogues of an algebraic correspondence; see \cite[Chapter 16]{Fulton} for an introduction. In this note, we introduce the corresponding combinatorial construction on the  matroid/polymatroid side, which we call \emph{matroid/polymatroid correspondence}. By definition, these correspondences are functors between suitable poset categories.

	Let $E$ be a finite set, and let $M$ and $N$ be two matroids with ground set $E$. We say that $N$ is a \emph{quotient} of $M$, denoted by $M\twoheadrightarrow N$, if any flat of $N$ is also a flat of $M$.  %Equivalently, $N$ is a quotient of $M$ if the identity map on $E$ is a {\it strong map} from $M$ to $N$. 
	We denote by $\Mat(E)$ the \textit{poset category of matroids} on $E$, whose objects are the set of matroids with ground set $E$, and whose morphisms are matroid quotients. Note that there is at most one morphism between any two matroids in the category $\Mat(E)$. Hence,  $\Mat(E)$ can  be viewed as a poset of matroids on $E$.

	\begin{definition}\label{def:mat_corr}
		Let  $E_1,E_2$ be finite sets and $\mathscr{C}\in \Mat(E_1 \sqcup E_2)$. We define the \emph{matroid correspondence} ${\mathscr{C}}_*$ to be the functor
		\begin{align*}
			{\mathscr{C}}_*: \Mat(E_1) &\to\Mat(E_2)\\
			M &\mapsto ((M\oplus \mathbf{B}_{E_2}) \wedge {\mathscr{C}})\setminus E_1,
		\end{align*}
		where $\mathbf{B}_{E_2}$ denotes the boolean matroid on $E_2$, and $\wedge$ denotes matroid intersection. In \autoref{prop:functor in intro} we show that $\mathscr{C}_*$ is indeed a functor.  
	\end{definition}

	We show that various standard functors on matroids, including deletion, contraction, free extension, truncation, intersection,  union, and pullback, arise as correspondences, see \autoref{thm:matroid example}, \autoref{cor:trunctation}, and \autoref{thm:pullback}. In \autoref{prop:algebraic and representable} 
	we also show that if a matroid $M$ is representable or algebraic, then so is $\mathscr{C}_*(M)$. Moreover, composition of matroid correspondences is again a matroid correspondence, as shown in  \autoref{prop:composition}.

	%Our definition of matroid correspondences naturally generalizes to polymatroids. 
	In the last part of the paper, we extend our theory to polymatroids. In particular, we prove the following statements.
	\begin{enumerate}[leftmargin=24pt]
		\item Polymatroid correspondence commutes with multisymmetric lift (\autoref{prop:commute with lift}).
		\item The polymatroid correspondence defined by an algebraic polymatroid preserves algebraic polymatroids (\autoref{prop:polymatroid intersection preserves algebraic}). 
		\item Upon replacing each polynomial with its support, a linear operator on polynomials (with Lorentzian symbol) gives rise to polymatroid correspondence (\autoref{prop:polymatroid correspondence as support}).
	\end{enumerate}
	
	Although matroid/polymatroid correspondences serve as combinatorial analogues of algebraic correspondences and linear operators on Lorentzian or volume polynomials, there is a key difference. The image of an algebraic correspondence or a linear operator may be zero. Combinatorially, this would correspond to a matroid or polymatroid with an empty set of bases, which is impossible by definition.
	
	In fact, when the algebraic correspondence or linear operator has zero image, the associated polymatroid correspondence instead produces a matroid or polymatroid of strictly larger rank than expected. One way to observe this phenomenon is through matroid intersection: one might expect that the corank of the intersection equals the sum of the coranks of the two matroids, but this is not always the case; see \cite[Proof of Theorem 5.11]{GHM+25} for a remedy of the rank discrepancy using Higgs lifts. %\footnote{See \cite[Proof of Theorem 5.11]{GHM+25} for a remedy of the rank discrepancy using Higgs lifts.} 
	This also explains why better functorial properties arise when the rank behaves well; see \autoref{prop:uniform_preserving_corrs}. 
	
	Thus, matroid and polymatroid correspondences are, in a sense, more forgiving: they continue to yield meaningful combinatorial information even when the corresponding algebraic correspondence or linear operator has zero image.
	
	\subsection*{Notation and organization of the paper} 
	All matroids considered in this paper are over finite sets. For a matroid $M$ on $E$ we denote its dual matroid by $M^\vee$ and its set of  bases by $\mathcal{B}_M$. If $N$ is another matroid on $E$, we write $M\wedge N$ and $M\vee N$ for their intersection and union, respectively, see \cite[Definition 5.10]{GHM+25}. We write $M \twoheadrightarrow N$ if $N $ is a quotient of $M$. We denote by $\Mat(E)$ the category of matroids on $E$ with matroid quotients as morphisms. 	
	
	For any set $E$ we write $E^*$ for a set disjoint from $E$ and equipped with a natural bijection  $e\mapsto e^*$, $e\in E$. If $E$ has $n$ elements, we denote by $U_{r,n}$ or $U_{r,E}$ the uniform matroid on $E$ of rank $r$.
	
	We refer the reader to standard references on matroid theory for the remaining unexplained notation and terminology \cite{Whi86,Oxley}.

	In \autoref{sec:functor}, we show that matroid correspondences are functors and some other properties. In \autoref{sec:example and properties}, we give several examples of functors that can be realized as matroid correspondences. We also investigate matroid correspondences where $E_1$ and $E_2$ have the same cardinality. % and pose several questions.
	
	The definition, standard constructions, and basic properties of polymatroids are reviewed in \autoref{sec:background on polymatroids}. In \autoref{sec:Lorentzian and volume}, we discuss the relationship between polymatroids and Lorentzian/volume polynomials. The definition and various properties of polymatroid correspondences are introduced in \autoref{sec:polynmatroid correspondences}.
	
	Finally, in \autoref{sec:relation with symbols}, we show that polymatroid correspondences are the combinatorial analogues of linear operators on Lorentzian polynomials.
	
	\subsection*{Acknowledgments}
	This paper grew out of discussions at the Arizona--New Mexico Symposium on Commutative Algebra and its Interactions: Geometric Combinatorics, held at New Mexico State University on December 8--12, 2025. The authors thank the organizers and participants of the symposium for providing a stimulating environment for collaboration, and New Mexico State University for hosting the event. The symposium was supported by National Science Foundation grants DMS--2401522 and  DMS--2344588, the Department of Mathematics at New Mexico State University, and the School of Mathematical and Statistical Sciences at Arizona State University. The second, fourth, and sixth authors were partially funded by the AMS--Simons Travel Grant. The fifth author was partially funded by NSF Grant DMS--2401522. The eighth and tenth authors were partially supported by Department of Education Graduate Assistance in Areas of National Need, Award P200A240169.
	
	\section{Properties of matroid correspondences}\label{sec:functor}
	In this section, we prove that matroid correspondences are functors, see \autoref{prop:functor in intro}, and we provide some other basic properties.
	
	First, we show that direct sum and deletion both preserve matroid quotients. 
	
	\begin{lemma}\label{lem:sum and deletion}
		Suppose $M,N\in\Mat(E_1)$, $P\in\Mat(E_2)$, and $E_3\subseteq E_1$. 
		\begin{enumerate}
			\item If $M\twoheadrightarrow N$, then $M\oplus P \twoheadrightarrow N\oplus P$.
			\item If $M\twoheadrightarrow N$, then $M\setminus E_3 \twoheadrightarrow N\setminus E_3$.
		\end{enumerate}   
	\end{lemma}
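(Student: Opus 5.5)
Both parts follow directly from the definition of quotient used in the paper: every flat of $N$ is a flat of $M$. For each part I will describe the flats of the new matroids in terms of the old ones and check that containment of flat-families is preserved.

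For part (1), I will use the standard description of flats of a direct sum. A subset $F\subseteq E_1\sqcup E_2$ is a flat of $M\oplus P$ if and only if $F\cap E_1$ is a flat of $M$ and $F\cap E_2$ is a flat of $P$. This holds because the closure operator of a direct sum acts componentwise, $\mathrm{cl}_{M\oplus P}(X)=\mathrm{cl}_M(X\cap E_1)\cup \mathrm{cl}_P(X\cap E_2)$, which in turn follows from additivity of rank. Now let $F$ be a flat of $N\oplus P$. Then $F\cap E_1$ is a flat of $N$, hence a flat of $M$ by hypothesis, and $F\cap E_2$ is a flat of $P$. So $F$ is a flat of $M\oplus P$.

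For part (2), I will use the fact that the flats of a deletion $M\setminus E_3$ are exactly the sets $G\setminus E_3$ with $G$ a flat of $M$. Equivalently, $X\subseteq E_1\setminus E_3$ is a flat of $M\setminus E_3$ if and only if $X=\mathrm{cl}_M(X)\setminus E_3$. This holds because the rank function of the deletion is the restriction of $r_M$. Now let $X$ be a flat of $N\setminus E_3$, and write $X=G\setminus E_3$ with $G$ a flat of $N$; for instance, take $G=\mathrm{cl}_N(X)$. Then $G$ is a flat of $M$ by hypothesis, so $X=G\setminus E_3$ is a flat of $M\setminus E_3$.

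The only point needing care is the characterization of flats of a deletion as traces $G\setminus E_3$ of flats. To justify it, note two things. First, if $G$ is a flat of $M$, then adding any $e\notin G\cup E_3$ to $G\setminus E_3$ raises the rank, since it already raises the rank of $G$ and $G\setminus E_3\subseteq G$. Second, if $X$ is a flat of $M\setminus E_3$, then $\mathrm{cl}_M(X)\setminus E_3=X$ by the definition of closure within $E_1\setminus E_3$. Nothing else is an obstacle; the whole lemma is routine from the flat-based definition of quotients.
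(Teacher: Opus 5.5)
Your proof is correct and follows the same route as the paper, which simply observes that the condition ``every flat of $N$ is a flat of $M$'' is readily preserved under direct sums and deletions. You supply the omitted routine details: flats of $M\oplus P$ are unions of a flat of $M$ and a flat of $P$, and flats of $M\setminus E_3$ are the traces $G\setminus E_3$ of flats $G$ of $M$. One step you leave implicit is the claim that adding $e\notin G\cup E_3$ to $G\setminus E_3$ raises the rank; it holds by submodularity, equivalently because $\mathrm{cl}_M(G\setminus E_3)\subseteq \mathrm{cl}_M(G)=G$.
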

	\begin{proof}
		Recall that the identity on $E$ induces a matroid quotient $M\twoheadrightarrow N$ if and only if every flat in $N$ is a flat in $M$. This property is readily shown to be preserved when taking direct sums and  deletions. 
	\end{proof}

	%\begin{definition}\label{def:mat_intersection}
	%For $M, N\in \Mat(E)$, the \emph{matroid intersection} $M\vee N$ is the matroid on $E$ whose spanning sets are the intersections of spanning sets of $M$ and $N$.
	%\end{definition}
	
	We now show that matroid intersection preserves matroid quotients. %We prove this using matroid duality.
	%xFor a matroid $M$, we denote its dual matroid by $M^\vee$. 
	%\begin{lemma}
	%\label{lem: dual}
	%Let $M,N\in\Mat(E)$. Then $M\twoheadrightarrow N$ if and only if $N^\vee\twoheadrightarrow M^\vee$. 
	%\end{lemma}

	\begin{lemma}\label{lem:intersection}
		Suppose $M_1, M_2, N_1, N_2\in\Mat(E)$. 
		\begin{enumerate}
			\item If $M_1\twoheadrightarrow N_1$ and $M_2\twoheadrightarrow N_2$, then $M_1\vee M_2\twoheadrightarrow N_1\vee N_2$.
			\item If $M_1\twoheadrightarrow N_1$ and $M_2\twoheadrightarrow N_2$, then $M_1\wedge M_2\twoheadrightarrow N_1\wedge N_2$. 
		\end{enumerate}
	\end{lemma}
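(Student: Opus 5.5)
Part (1), matroid union, goes through the rank formula; part (2), matroid intersection, then follows from (1) by duality.

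For (1), we use the standard characterization of quotients by rank: $M\twoheadrightarrow N$ if and only if
\[
r_M(B)-r_M(A)\geq r_N(B)-r_N(A)\quad\text{for all } A\subseteq B\subseteq E.
\]
It suffices to check this when $B=A\cup\{e\}$, which says that whenever $e$ is a coloop of $M|_{A\cup e}$ relative to $A$ in $N$, it is one in $M$. Equivalently, $\mathrm{cl}_M(A)\subseteq \mathrm{cl}_N(A)$ for every $A\subseteq E$. The union has rank function
\[
r_{M_1\vee M_2}(A)=\min_{X\subseteq A}\bigl(r_{M_1}(X)+r_{M_2}(X)+|A\setminus X|\bigr).
\]
A cleaner route works at the level of independent sets. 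The independent sets of $M_1\vee M_2$ are the unions $I_1\cup I_2$ with $I_j$ independent in $M_j$. So we check the closure containment: suppose $e\notin \mathrm{cl}_{N_1\vee N_2}(A)$, and take a maximal independent set $J=J_1\cup J_2\subseteq A$ of $N_1\vee N_2$ with $J\cup e$ independent. We then want to lift this to $M_1\vee M_2$.

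I would in fact avoid this direct lifting and instead use the known factorization of quotients. By Higgs' theorem (the factorization theorem for strong maps), $M\twoheadrightarrow N$ if and only if there is a matroid $Q$ on $E\sqcup F$ with $M=Q\setminus F$ and $N=Q/F$. Given such $Q_1$ on $E\sqcup F_1$ and $Q_2$ on $E\sqcup F_2$, extend each to $E\sqcup F_1\sqcup F_2$ by adding the other's extra elements as loops, and form $Q=Q_1\vee Q_2$. Deletion commutes with union, since the rank formula restricts. So $Q\setminus(F_1\sqcup F_2)=M_1\vee M_2$. For contraction, $Q/(F_1\sqcup F_2)$ is a quotient of $Q\setminus(F_1\sqcup F_2)$, and we need to compare it with $N_1\vee N_2$. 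Here we use that $(Q_1\vee Q_2)/F \twoheadrightarrow$ is related to $Q_1/F\vee Q_2/F$. What we actually need is only that $N_1\vee N_2$ is a quotient of $M_1\vee M_2$, and this weakens the requirement.

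Because that comparison is delicate, I expect this to be the main obstacle, and I would settle (1) directly via the rank inequality. Fix $A\subseteq E$ and $e\notin A$. Choose $X\subseteq A\cup e$ attaining the minimum for $r_{M_1\vee M_2}(A\cup e)$. If $e\notin X$, this $X$ is admissible for $A$, so the rank of $A\cup e$ equals the rank of $A$ plus $1$ in $M_1\vee M_2$ exactly when no minimizer avoids $e$. Using $r_{M_j}(X\cup e)-r_{M_j}(X)\geq r_{N_j}(X\cup e)-r_{N_j}(X)$, one shows that the increment function $\delta_M(A,e)$ dominates $\delta_N(A,e)$. Concretely, suppose $\delta_{N}(A,e)=1$ and $\delta_M(A,e)=0$. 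Then there is a minimizer $X\subseteq A$ for $A\cup e$ in $M_1\vee M_2$, and also a minimizer $X'$ of $A$ with $e\in\mathrm{cl}_{M_j}(X)$ for both $j$. Replacing $X$ by a suitable submodular combination with the $N$-minimizers yields a contradiction. This submodular exchange is the key technical step.

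For (2), we use duality. First, $(M\wedge N)^\vee=M^\vee\vee N^\vee$, which holds by the definition in \cite{GHM+25}, where intersection is the dual of union. Second, $M\twoheadrightarrow N$ if and only if $N^\vee\twoheadrightarrow M^\vee$; this follows since the flats of $N$ being flats of $M$ is equivalent, via the rank inequality and $r_{M^\vee}(A)=|A|-r(M)+r_M(E\setminus A)$, to the reversed inequality. Then
\[
M_i\twoheadrightarrow N_i \;\Rightarrow\; N_i^\vee\twoheadrightarrow M_i^\vee \;\Rightarrow\; N_1^\vee\vee N_2^\vee\twoheadrightarrow M_1^\vee\vee M_2^\vee,
\]
and dualizing once more gives $M_1\wedge M_2\twoheadrightarrow N_1\wedge N_2$.
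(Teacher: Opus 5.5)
Your part (2) is exactly the paper's argument: dualize via $M\twoheadrightarrow N\iff N^\vee\twoheadrightarrow M^\vee$ and $(M\wedge N)^\vee=M^\vee\vee N^\vee$, apply (1), and dualize back. The gap is in part (1), which the paper simply cites from \cite[Exercise 8.7]{Whi86}. You sketch three routes but complete none, so as written (1) is unproved.

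In the rank route the setup is reversed. For $e\notin A$, the union $M_1\vee M_2$ satisfies $r(A\cup e)=r(A)+1$ if and only if \emph{some} minimizer for $A\cup e$ avoids $e$, not ``no minimizer''. So under $\delta_M(A,e)=0$ every minimizer contains $e$, and ``a minimizer $X\subseteq A$'' does not exist. More importantly, the decisive step is only asserted. From a minimizer $X\ni e$ you do get $e\in\mathrm{cl}_{M_j}(X\setminus e)\subseteq\mathrm{cl}_{N_j}(X\setminus e)$. But this only bounds $r_{N_1\vee N_2}(A\cup e)$ by the value of the union formula at $X\setminus e$, and that set need not be a minimizer for $N_1\vee N_2$ at $A$. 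Some genuine uncrossing is required, and ``a suitable submodular combination'' does not supply it.

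Here is an uncrossing that works. For $Z\subseteq A\cup e$ put $\phi_M(Z)=r_{M_1}(Z)+r_{M_2}(Z)+|(A\cup e)\setminus Z|$, define $\phi_N$ similarly, and set $D=\phi_M-\phi_N$. Then $D$ is increasing because $M_j\twoheadrightarrow N_j$. Suppose $\delta_M(A,e)=0$ and $\delta_N(A,e)=1$. Take a global minimizer $X\ni e$ of $\phi_M$; then $\phi_M(Z)\geq\phi_M(X)+1$ whenever $e\notin Z$. Take a global minimizer $Y\not\ni e$ of $\phi_N$. Submodularity of $\phi_N$ together with $D(X\cap Y)\leq D(X)$ gives
\[
\phi_M(X\cap Y)+\phi_N(X\cup Y)\leq \phi_N(X)+\phi_N(Y)+D(X\cap Y)\leq \phi_M(X)+\phi_N(Y).
\]
But the left side is at least $\phi_M(X)+1+\phi_N(Y)$, a contradiction.

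Alternatively, the Higgs route you abandoned closes at once. Each $Q_j$ has the other $F_i$ as loops, and $r_{Q_j}(Y\cup G_j)+|F_j\setminus G_j|\geq r_{Q_j}(Y\cup F_j)$ for $G_j\subseteq F_j$. So the union rank formula gives $r_{Q_1\vee Q_2}(A\cup F_1\cup F_2)=\min_{Y\subseteq A}\bigl(r_{Q_1}(Y\cup F_1)+r_{Q_2}(Y\cup F_2)+|A\setminus Y|\bigr)$. Hence $(Q_1\vee Q_2)/(F_1\sqcup F_2)=N_1\vee N_2$ exactly, and $Q\setminus F\twoheadrightarrow Q/F$ holds for every matroid $Q$.
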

	
	\begin{proof}  
		The first part is \cite[Exercise 8.7]{Whi86}. For the second part, we first get $N_1^\vee\twoheadrightarrow M_1^\vee$ and $N_2^\vee\twoheadrightarrow M_2^\vee$ by \cite[Proposition 7.3.1]{Oxley}. Then by the first part, we have $N_1^\vee\vee N_2^\vee\twoheadrightarrow M_1^\vee\vee M_2^\vee$. Then taking the dual again, we get $M_1\wedge M_2\twoheadrightarrow N_1\wedge N_2$. 
	\end{proof}
	
	With the previous two lemmas in hand, we are ready to show that matroid correspondences define functors, see \autoref{def:mat_corr}.
	
	\begin{proposition}\label{prop:functor in intro}
		Let $M,N\in \Mat(E_1)$  with $M\twoheadrightarrow N$. For every  $\mathscr{C}\in \Mat(E_1 \sqcup E_2)$ we have a matroid quotient 
		$
		\mathscr{C}_*(M)
		\to
		\mathscr{C}_*(N)
		$ 
		between matroids on $E_2$.
	\end{proposition}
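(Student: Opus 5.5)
The plan is to follow the three steps in the definition of $\mathscr{C}_*$ and check that each one preserves the quotient, using the lemmas just established.

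\textbf{Step 1 (direct sum).} Starting from $M\twoheadrightarrow N$ on $E_1$, apply \autoref{lem:sum and deletion}(1) with $P=\mathbf{B}_{E_2}$. This gives
\[
M\oplus \mathbf{B}_{E_2}\twoheadrightarrow N\oplus \mathbf{B}_{E_2}
\]
as matroids on $E_1\sqcup E_2$.

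\textbf{Step 2 (intersection).} The identity morphism gives the trivial quotient $\mathscr{C}\twoheadrightarrow\mathscr{C}$, since every flat of $\mathscr{C}$ is a flat of $\mathscr{C}$. Apply \autoref{lem:intersection}(2) with $M_1=M\oplus\mathbf{B}_{E_2}$, $N_1=N\oplus\mathbf{B}_{E_2}$ and $M_2=N_2=\mathscr{C}$. This yields
\[
(M\oplus\mathbf{B}_{E_2})\wedge\mathscr{C}\twoheadrightarrow (N\oplus\mathbf{B}_{E_2})\wedge\mathscr{C}.
\]

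\textbf{Step 3 (deletion).} Apply \autoref{lem:sum and deletion}(2) with $E_3=E_1\subseteq E_1\sqcup E_2$. Deleting $E_1$ from both sides of the quotient in Step 2 gives $\mathscr{C}_*(M)\twoheadrightarrow\mathscr{C}_*(N)$ on $E_2$, which is the claim.

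Functoriality then follows at once. $\Mat(E_i)$ is a poset category, so there is at most one morphism between any two objects. Hence compositions and identities are automatically respected once we know that morphisms are sent to morphisms.

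I see no serious obstacle, since all the work is contained in the two lemmas. The one point to be careful about is that \autoref{lem:intersection}(2) is stated for pairs of quotients, so the identity quotient $\mathscr{C}\twoheadrightarrow\mathscr{C}$ must be supplied explicitly. It is also worth confirming that the paper's ordering convention for the "intersection" $\wedge$ (defined via spanning sets/duality) matches the dual of $\vee$, as used in that lemma's proof.
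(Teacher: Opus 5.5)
Your proposal is correct and matches the paper's proof: the paper combines \autoref{lem:sum and deletion}(1) with \autoref{lem:intersection}(2), implicitly using the identity quotient on $\mathscr{C}$ as you make explicit, and then deletes $E_1$ via \autoref{lem:sum and deletion}(2). Your remark that identities and compositions are automatically respected in a poset category is also the right justification for calling $\mathscr{C}_*$ a functor.
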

	\begin{proof}%[Proof of Proposition \ref{prop:functor in intro}]
		%Let $\mathscr{C}$ be a matroid on $E_1 \sqcup E_2$ and $M,N\in\Mat(E_1)$. 
		%We need to prove that $M\twoheadrightarrow N$ implies $\mathscr{C}_*(M)\twoheadrightarrow \mathscr{C}_*(N)$. 
		Combining \autoref{lem:sum and deletion} (1) and \autoref{lem:intersection} (2) we obtain \[(M\oplus \mathbf{B}_{E_2}) \wedge \mathscr{C}\twoheadrightarrow (N\oplus \mathbf{B}_{E_2}) \wedge \mathscr{C}.\]
		The conclusion now follows from \autoref{lem:sum and deletion} (2).
	\end{proof}
	
	Although matroid correspondences are functorial for matroid quotients, they need not be so for other natural maps on matroids, such as weak maps,   as the following example shows. Recall that if $M, N\in \Mat(E)$, we say $M\to N$ is a \emph{weak map} if the bases of $N$ are independent in $M$.

	\begin{example}\label{ex:weak maps not preserved}
		We now give an example of a correspondence that does not preserve weak maps.
		
		Consider the matroid $\mathscr{C}$ on $E \sqcup E^* = \{1,2\}\sqcup \{1^*,2^*\}$ with bases given by
		\begin{align*}
			\mathcal{B}_{\mathscr{C}}\coloneqq \{\{1,1^*\},\{1^*,2^*\}, \{1, 2^*\}\}.
		\end{align*}
		%In other words, $\mathscr{C}$ is the direct sum of the rank zero matroid on $\{2\}$ and the rank 2 uniform matroid on $\{1, 1^*, 2^*\}$. 
		Let $M$ and $N$ be matroids on $\{1,2\}$ whose bases are given by
		\[
		\mathcal{B}_M\coloneqq \{\{1\}, \{2\}\}\quad \text{and}\quad \mathcal{B}_N\coloneqq \{\{1\}\}.
		\]
		%Let $M$ denote $U_{1,2}$ on $\{1,2\}$ and $N$ denote $U_{1,1}\oplus U_{0,1}$ with single basis $\{1\}$. 
		Then one computes the following:
		\begin{align*}
			\mathcal{B}_{\mathscr{C}_*(M)} = \{\{1^*\}, \{2^*\}\}\quad \text{and}\quad
			\mathcal{B}_{\mathscr{C}_*(N)} = \{\{1^*,2^*\}\}.
		\end{align*}
		Therefore, $M\to N$ is a weak map, yet $\mathscr{C}_*(M)\to  \mathscr{C}_*(N)$ is not. %Therefore, correspondences do not preserve weak maps in general.
	\end{example}
	
	In the following proposition we  show that if a matroid $M$ is representable or algebraic, then so is $\mathscr{C}_*(M)$.
	
	\begin{proposition}\label{prop:algebraic and representable}\hfill
		\begin{enumerate}
			\item Given any field $\mathbb{K}$, if ${\mathscr{C}}$ is an algebraic matroid over $\mathbb{K}$ on $E_1\sqcup E_2$, then ${\mathscr{C}}_*$ preserves the algebraic matroids over $\mathbb{K}$. 
			\item Given any infinite field $\mathbb{K}$, if ${\mathscr{C}}$ is a representable matroid over $\mathbb{K}$ on $E_1\sqcup E_2$, then ${\mathscr{C}}_*$ preserves the representable matroids over $\mathbb{K}$. 
		\end{enumerate}
	\end{proposition}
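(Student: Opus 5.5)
We decompose $\mathscr{C}_*$ into the three operations in \autoref{def:mat_corr} and check that each preserves the relevant class: direct sum with $\mathbf{B}_{E_2}$, intersection with $\mathscr{C}$, and deletion of $E_1$. Direct sums and deletions are standard. A direct sum of algebraic (resp.\ $\mathbb{K}$-representable) matroids is algebraic (resp.\ $\mathbb{K}$-representable). For algebraic matroids, take the two field extensions in algebraically independent sets of variables, i.e.\ realize both inside a common extension where their transcendence bases are jointly algebraically independent. For representable ones, use block-diagonal matrices. The boolean matroid $\mathbf{B}_{E_2}$ is realized by independent transcendentals, or by the standard basis. Deletion corresponds to restricting to a subset of the realizing elements or vectors. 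So the whole content is showing that $P \wedge \mathscr{C}$ is algebraic (resp.\ representable) whenever $P=M\oplus\mathbf{B}_{E_2}$ and $\mathscr{C}$ are, on the common ground set $E=E_1\sqcup E_2$.

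For the intersection we use duality: $P\wedge \mathscr{C}=(P^\vee\vee\mathscr{C}^\vee)^\vee$, exactly as in the proof of \autoref{lem:intersection}.

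\emph{Representable case.} Duals of $\mathbb{K}$-representable matroids are $\mathbb{K}$-representable. The union of two $\mathbb{K}$-representable matroids is $\mathbb{K}$-representable when $\mathbb{K}$ is infinite. To see this, represent the union by vectors $v_e\oplus \lambda_e w_e$ with generic scalars $\lambda_e$, or more simply by $v_e + A w_e$ for a generic linear map $A$ into a common space. Infinitude of $\mathbb{K}$ lets us avoid the finitely many polynomial conditions that would drop ranks. Alternatively, we can realize the intersection directly. Take a representation of $P$ as a linear subspace $V_P\subseteq \mathbb{K}^E$ (row space) and similarly $V_{\mathscr{C}}$, and replace $V_{\mathscr{C}}$ by $g\cdot V_{\mathscr{C}}$ for a generic diagonal torus element $g\in(\mathbb{K}^*)^E$; this does not change the matroid. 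Then the matroid of the intersection of the corresponding projective linear spaces, i.e.\ of $V_P^\perp+ (gV_{\mathscr{C}})^\perp$ dualized, is $P\wedge\mathscr{C}$ for generic $g$. I would present the union route, since it is the cleanest, and cite the standard fact (e.g.\ \cite{Oxley}) that unions of representable matroids over infinite fields are representable.

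\emph{Algebraic case.} Duality does not preserve algebraicity in general, so we instead realize the intersection directly. The spanning sets of $P\wedge\mathscr{C}$ are intersections of spanning sets of $P$ and $\mathscr{C}$, and its rank function is
\[ r(A)=\min_{B\supseteq A}\bigl(r_P(B)+r_{\mathscr{C}}(B)-|B|\bigr)+|E| - \text{const}, \]
suitably normalized. The geometric model is as follows. Let $X\subseteq \mathbb{A}^E$ and $Y\subseteq\mathbb{A}^E$ be irreducible varieties over $\overline{\mathbb{K}}$ whose coordinate projections give $P$ and $\mathscr{C}$. Translate $Y$ by a generic torus element (or generic point), and take a component $Z$ of $X\cap t\cdot Y$. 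The algebraic matroid of $Z$ should then be $P\wedge\mathscr{C}$, by a dimension count of fibres of coordinate projections, in the spirit of the Kleiman transversality theorem for the torus action. Equivalently, and more algebraically, consider the multiplication map $X\times Y\to\mathbb{A}^E$, $(x,y)\mapsto x\cdot y$ coordinatewise, and take the closure of its image. Its algebraic matroid is a union-type construction, which is the algebraic analogue of the union of \cite{Whi86}; one can then pass to the intersection via the generic fibre.

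The main obstacle is this algebraic step, especially over fields of positive characteristic, where generic transversality can fail (inseparability). There I would try to avoid geometry and argue field-theoretically. Use the known theorem (Piff/Welsh, see \cite{Oxley}) that unions of algebraic matroids over $\mathbb{K}$ are algebraic over $\mathbb{K}$, and, since the first factor here is $P=M\oplus\mathbf{B}_{E_2}$, rewrite $P\wedge\mathscr{C}$ via contraction. Concretely, $(M\oplus\mathbf{B}_{E_2})\wedge\mathscr{C}$ restricted to $E_2$ should be expressible as a contraction/deletion of a union built from $\mathscr{C}$ and $M$ on a doubled ground set $E_1\sqcup E_1^*\sqcup E_2$, and algebraic matroids are closed under minors. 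Finding this minor-of-a-union formula is the step I expect to require the most care.
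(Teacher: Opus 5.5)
Your reduction to the statement that intersection with $\mathscr{C}$ preserves the class is the paper's argument, and so is your treatment of part (2): write $P\wedge\mathscr{C}=(P^\vee\vee\mathscr{C}^\vee)^\vee$, use that duality preserves $\mathbb{K}$-representability, and use the Piff--Welsh theorem for unions over infinite fields. One slip there: the ``simpler'' realization $v_e+Aw_e$ with a single generic linear map $A$ does not represent the union. It is a linear image of the configuration $v_e\oplus w_e$, which already fails for $M_1=M_2=U_{1,2}$ realized by equal vectors. You need independent generic scalars $\lambda_e$ for each element, as in your first formula.

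Part (1), however, has a genuine gap. Everything rests on showing that $P\wedge\mathscr{C}$ is algebraic over $\mathbb{K}$ whenever $P$ and $\mathscr{C}$ are, and neither of your routes proves it.

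The generic-translate picture is only asserted, and the main obstruction is rank, not inseparability. A transversality or dimension count predicts rank $\rk P+\rk\mathscr{C}-|E|$. But $\rk(P\wedge\mathscr{C})=\rk P+\rk\mathscr{C}-\rk(P\vee\mathscr{C})$, which is strictly larger whenever $P\vee\mathscr{C}$ is not Boolean. This already happens in your setting. In \autoref{ex:weak maps not preserved}, take $P=N\oplus\mathbf{B}_{\{1^*,2^*\}}$: the expected rank is $3+2-4=1$, but $\rk(P\wedge\mathscr{C})=2$. This discrepancy is exactly what \cite[Theorem 5.11]{GHM+25} handles via Higgs lifts, and the paper simply cites that theorem for intersections of algebraic matroids.

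Your field-theoretic fallback stops at the formula you say you have not found. That formula does exist. For matroids $M,N$ on $F$:
\begin{itemize}
\item let $M'$ on $F\sqcup F^*$ be $M$ with $e^*$ added parallel to each $e$;
\item let $N'=U_{0,F}\oplus N$, with $N$ placed on $F^*$.
\end{itemize}
Then $M\wedge N=(M'\vee N')/F^*$, because both sides have rank function $S\mapsto\min_{R\supseteq S}\big(\rk_M(R)+\rk_N(R)+|F\setminus R|\big)-\rk(M\vee N)$. Algebraic matroids over $\mathbb{K}$ are closed under parallel extension, adding loops, and contraction. So this formula reduces (1) to closure under unions, which you would still have to justify. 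The Piff--Welsh theorem used in the paper is the representable statement, not the algebraic one.

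Finally, do not assert that duality fails to preserve algebraicity. Whether it does is a long-standing open question, and it holds in characteristic $0$. The correct point is only that duality cannot be used here.
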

	\begin{proof}
		For the first statement, notice that taking direct sum and deletion both preserve algebraic matroids over $\mathbb{K}$. Moreover, by \cite[Theorem 5.11]{GHM+25}, the intersection of two algebraic matroids over $\mathbb{K}$ is also an algebraic matroid over $\mathbb{K}$. Therefore, if $\mathscr{C}$ is algebraic over $\mathbb{K}$, then %each operation in the definition of $\mathscr{C}_*$ preserves algebraic matroids over $\mathbb{K}$. Thus, 
		$\mathscr{C}_*$ preserves algebraic matroids over $\mathbb{K}$. 
		
		For the second statement, %we use the same argument. Taking 
		similarly we observe that direct sum and deletion both preserve representable matroids over $\mathbb{K}$. By a theorem of Piff and Welsh \cite[Page 435, Exercise 9]{Oxley} together with the preservation of representability under duality \cite[Corollary 2.2.9]{Oxley}, when $\mathbb{K}$ is an infinite field, taking intersection preserves representability over $\mathbb{K}$. Thus, if $\mathscr{C}$ is representable over $\mathbb{K}$, then $\mathscr{C}_*$ preserves representable matroids over $\mathbb{K}$. 
	\end{proof}
	
	The following proposition is a combinatorial analogue of the product correspondence formula; see \cite[Definition 16.1.1]{Fulton}.
	
	\begin{proposition}\label{prop:composition}
		Composition of matroid correspondences is also a matroid correspondence. More precisely,  	given disjoint finite sets $E_1, E_2, E_3$, and matroids $\mathscr{C}_1 \in \Mat(E_1\sqcup E_2)$, 
		$\mathscr{C}_2 \in \Mat(E_2\sqcup E_3)$, the  composition
		\[
		\mathscr{C}_{2*}\circ \mathscr{C}_{1*}: \Mat(E_1)\to \Mat(E_3)
		\]
		is equal to $\mathscr{C}_{*}$, where
		\[
		\mathscr{C}=\big((\mathscr{C}_{1}\oplus \mathbf{B}_{E_3})\wedge (\mathbf{B}_{E_1}\oplus \mathscr{C}_{2})\big)\setminus E_2.
		\]
	\end{proposition}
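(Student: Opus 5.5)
The plan is to reduce both sides to the same iterated intersection on $E_1\sqcup E_2\sqcup E_3$, followed by deleting $E_1\sqcup E_2$. Besides commutativity and associativity of $\wedge$, three elementary facts are needed.

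\textbf{(a)} Commutativity and associativity of $\wedge$, which follow by duality from those of $\vee$.

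\textbf{(b)} Compatibility with boolean summands: for $X,Y\in\Mat(E)$ we have $(X\wedge Y)\oplus\mathbf{B}_F=(X\oplus\mathbf{B}_F)\wedge(Y\oplus\mathbf{B}_F)$, and $(X\oplus\mathbf{B}_F)\setminus E'=(X\setminus E')\oplus\mathbf{B}_F$ for $E'\subseteq E$. Both are immediate: $\wedge$ of direct sums is computed componentwise, and $\mathbf{B}_F\wedge\mathbf{B}_F=\mathbf{B}_F$.

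\textbf{(c)} The key ``projection formula'': for $A\in\Mat(E\sqcup F)$ and $B'\in\Mat(E)$,
\[
\bigl(A\wedge(B'\oplus\mathbf{B}_F)\bigr)\setminus F=(A\setminus F)\wedge B'.
\]

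I expect (c) to be the only real obstacle. I would prove it by duality. Deletion dualizes to contraction, $\wedge$ to $\vee$, and $\mathbf{B}_F$ to the rank-zero matroid $U_{0,F}$. So (c) is equivalent to
\[
\bigl(A^\vee\vee(B'^\vee\oplus U_{0,F})\bigr)/F=(A^\vee/F)\vee B'^\vee .
\]
To check this, use the union rank formula $r_{M_1\vee M_2}(X)=\min_{Y\subseteq X}\bigl(r_1(Y)+r_2(Y)+|X\setminus Y|\bigr)$. Evaluate $r_U(X\cup F)$ for $X\subseteq E$, where $U$ is the union on the left. Moving an element $f\in F$ into $Y$ raises $r_{A^\vee}$ by at most $1$, leaves the second rank unchanged ($f$ is a loop), and lowers $|X\setminus Y|$ by $1$. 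Hence we may assume $Y=Z\cup F$ with $Z\subseteq X$, which gives
\[
r_U(X\cup F)=\min_{Z\subseteq X}\bigl(r_{A^\vee}(Z\cup F)+r_{B'^\vee}(Z)+|X\setminus Z|\bigr).
\]
By the same argument, $r_U(F)=r_{A^\vee}(F)$. Subtracting gives exactly the union rank formula for $A^\vee/F$ and $B'^\vee$, since $r_{A^\vee/F}(Z)=r_{A^\vee}(Z\cup F)-r_{A^\vee}(F)$. This proves (c).

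With these facts, fix $M\in\Mat(E_1)$ and set $P=(M\oplus\mathbf{B}_{E_2}\oplus\mathbf{B}_{E_3})\wedge(\mathscr{C}_1\oplus\mathbf{B}_{E_3})$.
\begin{itemize}
\item By (b), $\mathscr{C}_{1*}(M)\oplus\mathbf{B}_{E_3}=P\setminus E_1$.
\item Hence $\mathscr{C}_{2*}(\mathscr{C}_{1*}(M))=\bigl((P\setminus E_1)\wedge\mathscr{C}_2\bigr)\setminus E_2$.
\item Applying (c) with $F=E_1$ turns this into $\bigl(P\wedge(\mathbf{B}_{E_1}\oplus\mathscr{C}_2)\bigr)\setminus(E_1\sqcup E_2)$.
\end{itemize}
On the other side, set $Q=(\mathscr{C}_1\oplus\mathbf{B}_{E_3})\wedge(\mathbf{B}_{E_1}\oplus\mathscr{C}_2)$, so that $\mathscr{C}=Q\setminus E_2$. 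Then $\mathscr{C}_*(M)=\bigl((M\oplus\mathbf{B}_{E_3})\wedge(Q\setminus E_2)\bigr)\setminus E_1$. Applying (c) with $F=E_2$ and the roles of the two factors swapped via commutativity gives $\bigl((M\oplus\mathbf{B}_{E_2}\oplus\mathbf{B}_{E_3})\wedge Q\bigr)\setminus(E_1\sqcup E_2)$. By associativity and commutativity of $\wedge$, the two expressions coincide, which proves $\mathscr{C}_{2*}\circ\mathscr{C}_{1*}=\mathscr{C}_*$.
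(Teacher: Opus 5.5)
Your proof is correct and follows the same route as the paper's. Both arguments rewrite the two sides as $\big((M\oplus\mathbf{B}_{E_2}\oplus\mathbf{B}_{E_3})\wedge(\mathscr{C}_1\oplus\mathbf{B}_{E_3})\wedge(\mathbf{B}_{E_1}\oplus\mathscr{C}_2)\big)\setminus(E_1\sqcup E_2)$ using exactly your identities (b) and (c); the paper uses the projection formula (c) without justification as ``direct computation,'' whereas you give a valid proof of it via duality and the matroid union rank formula.
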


	\begin{proof}
		The proof follows by direct computation. For every  $M\in \Mat(E_1)$ we have 
		\begin{align*}
			\Big(\Big(\big((M\oplus \mathbf{B}_{E_2})\wedge \mathscr{C}_1\big)\setminus E_1\Big)\oplus \mathbf{B}_{E_3}\Big)\wedge \mathscr{C}_2
			&=\Big(\Big(\big((M\oplus \mathbf{B}_{E_2})\wedge \mathscr{C}_1\big)\oplus \mathbf{B}_{E_3}\Big)\setminus E_1\Big)\wedge \mathscr{C}_2\\
			&=\Big(\Big(\big((M\oplus \mathbf{B}_{E_2})\wedge \mathscr{C}_1\big)\oplus \mathbf{B}_{E_3}\Big)\wedge (\mathbf{B}_{E_1}\oplus \mathscr{C}_2)\Big)\setminus E_1\\
			&=\Big(\Big((M\oplus \mathbf{B}_{E_2}\oplus \mathbf{B}_{E_3})\wedge \big(\mathscr{C}_1\oplus \mathbf{B}_{E_3}\big)\Big)\wedge (\mathbf{B}_{E_1}\oplus \mathscr{C}_2)\Big)\setminus E_1.
		\end{align*}
		Therefore, 
		\[
		\mathscr{C}_{2*}\circ \mathscr{C}_{1*}(M)=\Big((M\oplus \mathbf{B}_{E_2}\oplus \mathbf{B}_{E_3})\wedge \big(\mathscr{C}_1\oplus \mathbf{B}_{E_3}\big)\wedge (\mathbf{B}_{E_1}\oplus \mathscr{C}_2)\Big)\setminus (E_1\sqcup E_2).
		\]
		On the other hand, 
		\begin{align*}
			\mathscr{C}_{*}(M)&=\Big((M\oplus \mathbf{B}_{E_3})\wedge \Big(\big((\mathscr{C}_{1}\oplus \mathbf{B}_{E_3})\wedge (\mathbf{B}_{E_1}\oplus \mathscr{C}_{2})\big)\setminus E_2\Big)\Big)\setminus E_1\\
			&=\Big(\Big((M\oplus \mathbf{B}_{E_2}\oplus \mathbf{B}_{E_3})\wedge \big((\mathscr{C}_{1}\oplus \mathbf{B}_{E_3})\wedge (\mathbf{B}_{E_1}\oplus \mathscr{C}_{2})\big)\Big)\setminus E_2\Big)\setminus E_1\\
			&=\Big((M\oplus \mathbf{B}_{E_2}\oplus \mathbf{B}_{E_3})\wedge \big(\mathscr{C}_1\oplus \mathbf{B}_{E_3}\big)\wedge (\mathbf{B}_{E_1}\oplus \mathscr{C}_2)\Big)\setminus (E_1\sqcup E_2).
		\end{align*}
		Thus, 
		$
		\mathscr{C}_{2*}\circ \mathscr{C}_{1*}=\mathscr{C}_{*}
		$. 
	\end{proof}
	
	\section{Examples and further properties of matroid correspondences}\label{sec:example and properties}
	%identical but distinguished copy of the original set.
	In this section we provide some important examples of matroid correspondences. We begin with the following theorem.
	
	\begin{theorem}\label{thm:matroid example}
		Let $E$ be a finite set. The following functors on matroids  can be realized as correspondences via the given matroid. 
		\begin{enumerate}%[leftmargin=12pt]
			\item \textbf{Identity:} define the matroid $\mathscr{C}_{\mathrm{id}}$ on $E \sqcup E^*$ with bases 
			\begin{align*}
				\{S\sqcup (E\setminus S)^*\mid S\subseteq E\}.
			\end{align*}
			Then $\mathscr{C}_{\mathrm{id}*}(M)=M^*$ for every $M\in \Mat(E)$.\smallskip
			
			\item \textbf{Permutation of the ground set:} for any permutation $\sigma$ of $E$ define the matroid $\mathscr{C}_{\mathrm{\sigma}}$ on $E \sqcup E^*$ with bases 
			\begin{align*}
				\{S\sqcup (E\setminus \sigma(S))^*\mid S\subseteq E\}.
			\end{align*}
			Then $\mathscr{C}_{\mathrm{\sigma}*}(M)=\mathrm{\sigma}(M)^*$ for every $M\in \Mat(E)$, where $\mathrm{\sigma}(M)$ is the matroid on $E$ whose bases are $\{\mathrm{\sigma}(B)\mid B\in \mathcal{B}_M\}$.\smallskip
			
			\item \textbf{Deletion:} for any $e \in E$,  define the matroid $\mathscr{C}_{\setminus e}:= \mathscr{C}_{\mathrm{id}}\setminus e^*$ on $E \sqcup (E\setminus \{e\})^*$ with bases 
			\begin{align*}
				\{S\sqcup (E\setminus S)^*\mid e \in S\subseteq E\}.
			\end{align*}
			Then $\mathscr{C}_{\setminus e *}(M)=(M\setminus e)^*$ for every $M\in \Mat(E)$.\smallskip
			
			\item \textbf{Contraction:} for any $e \in E$, define the matroid $\mathscr{C}_{/e}:=\mathscr{C}_{\mathrm{id}}/ e^*$  on $E \sqcup (E\setminus \{e\})^*$ with bases 
			\begin{align*}
				\{S\sqcup (E\setminus (S\cup \{e\}))^*\mid S\subseteq E\setminus e\}.
			\end{align*}
			Then $\mathscr{C}_{/e *}(M)=(M/e)^*$ for every $M\in \Mat(E)$.\smallskip
			
			\item \textbf{Free extension:} for $t\not\in E$, define the matroid $\mathscr{C}_{+t}:=\mathscr{C}_{\mathrm{id}}+_{E\sqcup E^*}t$  on $E \sqcup E^*\sqcup \{t\} $ with bases 
			\begin{align*}
				\{S\sqcup (E\setminus S)^*\mid S\subseteq E\}\cup \{S\sqcup (E\setminus (S\cup\{e\}))^*\sqcup \{t\}\mid S\subseteq E, e\in E\setminus S\}.
			\end{align*}
			Then $\mathscr{C}_{+t *}(M)=(M+_E t)^*$ for every $M\in \Mat(E)$.\smallskip

			\item \textbf{Intersection with a fixed matroid:} for any matroid $M_0$ on $E$ define $\mathscr{C}_{\wedge M_0}$ on $E \sqcup E^*$ by 
			\[
			\mathscr{C}_{\wedge M_0}\coloneqq \mathscr{C}_{\mathrm{id}}\wedge (M_0 \oplus \mathbf{B}_{E^*}).
			\]
			Then $\mathscr{C}_{\wedge M_0*}(M)=(M\wedge M_0)^*$ for every $M\in \Mat(E)$.\smallskip
			
			\item \textbf{Union with a fixed matroid:} for any matroid $M_0$ on $E$ define $\mathscr{C}_{\vee M_0}$ on $E \sqcup E^*$ by 
			\[
			\mathscr{C}_{\vee M_0}\coloneqq \mathscr{C}_{\mathrm{id}}\vee (M_0 \oplus U_{0, E^*}).
			\]
			Then $\mathscr{C}_{\vee M_0*}(M)=(M\vee M_0)^*$ for every $M\in \Mat(E)$.\smallskip
			
			\item \textbf{Constant map:} let $E_1,E_2$ be finite sets. For any matroid $M_0$ on $E_2$  define 
			\[
			\mathscr{C}_{M_0}\coloneqq \mathbf{B}_{E_1} \oplus M_0.
			\]
			Then $\mathscr{C}_{M_0*}(M)=M_0$ for every $M\in \Mat(E_1)$.\smallskip
		\end{enumerate}
	\end{theorem}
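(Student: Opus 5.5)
The core of the plan is part (1); every other item then follows by bookkeeping, using that the correspondence is built from $\oplus$, $\wedge$ and deletion.

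\textbf{Part (1).} I will use the spanning-set description of intersection. The spanning sets of $M\wedge N$ are exactly the intersections of a spanning set of $M$ with a spanning set of $N$; equivalently $M\wedge N=(M^\vee\vee N^\vee)^\vee$. Hence the bases of $M\wedge N$ are the minimal sets of the form $S\cap T$ with $S$ spanning in $M$ and $T$ spanning in $N$.

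The spanning sets of $M\oplus \mathbf{B}_{E^*}$ are $S\sqcup E^*$ with $S$ spanning in $M$. The matroid $\mathscr{C}_{\mathrm{id}}$ is $\bigoplus_{e\in E}U_{1,\{e,e^*\}}$, a direct sum of parallel pairs. Its spanning sets are the sets $A\sqcup B^*$ with $A\cup B=E$.

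Intersecting, the spanning sets of $(M\oplus\mathbf{B}_{E^*})\wedge\mathscr{C}_{\mathrm{id}}$ are the sets $(S\cap A)\sqcup B^*$ with $A\cup B=E$. The minimal ones are $S\sqcup(E\setminus S)^*$ with $S$ a basis of $M$. Deleting $E$ means restricting to $E^*$. The bases of the restriction are the maximal sets of the form $B\cap E^*$ among bases $B$ of the intersection, namely $(E\setminus S)^*$. Thus $\mathscr{C}_{\mathrm{id}*}(M)=(M^\vee)^*$.

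This exposes a wrinkle: the statement says $M^*$, so I must read $M^*$ as the dual transported to $E^*$. I will verify this convention on a small example, such as $M=U_{1,2}$, before writing it up. I expect pinning down the direction of this duality to be the only delicate point.

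\textbf{Part (2).} The same computation applies with $\mathscr{C}_\sigma=\bigoplus_e U_{1,\{e,\sigma(e)^*\}}$.

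\textbf{Parts (3)--(7).} For these I use identities in place of fresh computations:
\begin{itemize}
\item $\mathscr{C}\setminus F^*$ and $\mathscr{C}/F^*$ for $F^*\subseteq E_2$ correspond to contracting, respectively deleting, in the output, after dualizing.
\item $(M\setminus e)^\vee=M^\vee/e$.
\item $(M/e)^\vee=M^\vee\setminus e$.
\end{itemize}
I would first prove a small lemma: for $X\subseteq E_2$,
\[
(\mathscr{C}\setminus X)_*(M)=\mathscr{C}_*(M)/X,
\]
up to the appropriate loop and coloop conventions. Alternatively, I would check the stated base lists directly by the same minimal-intersection argument. That direct check is routine for deletion and contraction.

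For free extension, I will compute the spanning sets of $\mathscr{C}_{+t}$ from its listed bases and repeat the part (1) computation. The identity needed is $(M+t)^\vee=$ the coextension of $M^\vee$.

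For intersection, I use associativity of $\wedge$ and the identity
\[
(M\oplus\mathbf{B}_{E^*})\wedge(M_0\oplus\mathbf{B}_{E^*})=(M\wedge M_0)\oplus\mathbf{B}_{E^*}.
\]
Both follow from the spanning-set description. Part (6) then reduces to part (1) applied to $M\wedge M_0$.

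For union, I will instead dualize: $\vee$ becomes $\wedge$, and $U_{0,E^*}$ becomes $\mathbf{B}$. Here I have to check that the dual of $\mathscr{C}_{\mathrm{id}}$ is again $\mathscr{C}_{\mathrm{id}}$, which holds because each parallel pair is self-dual.

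For the constant map, the spanning sets of $(M\oplus\mathbf{B}_{E_2})\wedge(\mathbf{B}_{E_1}\oplus M_0)$ are $S\sqcup T$ with $S$ spanning in $M$ and $T$ spanning in $M_0$. So this intersection is $M\oplus M_0$, and deleting $E_1$ gives $M_0$.
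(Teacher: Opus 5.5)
Your part (1) computation is wrong. The ``wrinkle'' you noticed is a symptom of that error, not a notational issue: in the paper $M^\vee$ denotes the dual, while $M^*$ is simply the copy of $M$ on $E^*$ via $e\mapsto e^*$.

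The sets $S\sqcup(E\setminus S)^*$ are not the minimal members of $\{(S\cap A)\sqcup B^*\mid A\cup B=E\}$. Already $A=E$, $B=\emptyset$ gives $S$ itself, which is strictly smaller unless $S=E$. Adding to $A$ an element of $E\setminus S$ only removes a starred element. So the minimal sets come from $A\supseteq E\setminus S$, $B=E\setminus A$. They are the sets $T\sqcup(S\setminus T)^*$ with $T\subseteq S\in\mathcal{B}_M$, all of size $\rk M$; this is exactly the computation the paper performs inside its proof of (5). Their largest traces on $E^*$ are the sets $S^*$, so $\mathscr{C}_{\mathrm{id}*}(M)=M^*$, as stated.

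Your answer $(M^\vee)^*$ cannot be right in any case. By \autoref{prop:functor in intro}, every $\mathscr{C}_*$ preserves quotients, whereas duality reverses them. Your planned test $M=U_{1,2}$ is self-dual and would not expose the mistake. The test $M=\mathbf{B}_E$ does: the intersection is then $\mathscr{C}_{\mathrm{id}}$ itself, whose restriction to $E^*$ is $\mathbf{B}_{E^*}$, not $U_{0,E^*}$.

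The error propagates into your reductions. Your lemma $(\mathscr{C}\setminus X)_*(M)=\mathscr{C}_*(M)/X$ is false. Take $E=\{e,f\}$, $M=U_{1,2}$, $\mathscr{C}=\mathscr{C}_{\mathrm{id}}$, $X=\{e^*\}$. The left side is $(M\setminus e)^*$, in which $f^*$ is a coloop. On the right, $\mathscr{C}_{\mathrm{id}*}(U_{1,2})=U_{1,\{e^*,f^*\}}$, and contracting $e^*$ makes $f^*$ a loop.

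The correct rules for $X\subseteq E_2$ are:
\begin{itemize}
\item $(\mathscr{C}/X)_*(M)=\mathscr{C}_*(M)/X$, because intersection commutes with contraction;
\item $(\mathscr{C}\setminus X)_*(M)=\mathscr{C}_*(M)\setminus X$, because intersection commutes with deleting a set of coloops of one factor, here $M\oplus\mathbf{B}_{E_2}$.
\end{itemize}
With the corrected (1), these give (3) and (4) at once, and (5) needs no identity about coextensions.

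Your reduction of (6) to (1), via associativity and $(M\oplus\mathbf{B}_{E^*})\wedge(M_0\oplus\mathbf{B}_{E^*})=(M\wedge M_0)\oplus\mathbf{B}_{E^*}$, is sound. So is your argument for (8). Both are slicker than a base-by-base check.

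For (7), however, ``dualize'' lacks its key step: you need a rule relating $\mathscr{C}_*$ to $(\mathscr{C}^\vee)_*$. The right one is $(\mathscr{C}^\vee)_*(M^\vee)=\mathscr{C}_*(M)^\vee$. It does hold (one can check it with the rank formula for matroid union), but you must prove it. Once you have it, $\mathscr{C}_{\vee M_0}^\vee=\mathscr{C}_{\wedge M_0^\vee}$ together with (6) applied to $M^\vee$ gives (7). The paper avoids this by computing the bases in (7) directly.
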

	\begin{proof}
		The proof follows by direct computation of bases. We illustrate the strategy by providing the proofs of (5) and (7).
		
		For (5), we note that  the bases of $M\oplus \mathbf{B}_{E^*}$ are $\{B\sqcup E^*\mid B\in \mathcal{B}_M\}$. Therefore, the bases of 
		$(M\oplus \mathbf{B}_{E^*})\wedge \mathscr{C}_{+t}$ are the minimal sets with respect to inclusion in the collection 
		\begin{align*}
			\{(S\cap B)\sqcup (E\setminus S)^*\mid S\subseteq E, B\in \mathcal{B}_M\}\cup \{(S\cap B)\sqcup (E\setminus (S\cup\{e\}))^*\sqcup \{t\}\mid S\subsetneq E, e\in E\setminus S\}.
		\end{align*}
		In each of these sets, adding an element from $E\setminus B$ to $S$ only decreases the cardinality. Therefore, the minimal sets are  
		\begin{align*}
			\{S \sqcup (B\setminus S)^*\mid S\subseteq B, B\in \mathcal{B}_M\}
			\cup \{S \sqcup (B\setminus (S\cup\{e\}))^*\sqcup \{t\}\mid S\subsetneq B, e\in B\setminus S\}.
		\end{align*}
		It follows that the bases of  $\mathscr{C}_{+t *}(M)$ are the same as those of $(M+_E t)^*$.
		
		For (7), the set of bases of $\mathscr{C}_{\vee M_0}$ is
		$$
		\{
		(S\cup A)\sqcup (E\setminus S)^*\mid S\subseteq E, A\in\mathcal{B}_{M_0}
		\},
		$$
		that is, 
		$$
		\{
		(S\cup A)\sqcup (E\setminus S)^*\mid A\in\mathcal{B}_{M_0}, S\subseteq (E\setminus A)
		\}.
		$$
		Therefore, the bases of $(M\oplus \mathbf{B}_{E^*})\wedge \mathscr{C}_{\vee M_0}$ are the minimal sets of
		$$
		\{
		(B\cap (S\cup A))\sqcup (E\setminus S)^*\mid A\in\mathcal{B}_{M_0}, S\subseteq (E\setminus A),  B\in \mathcal{B}_M
		\}.
		$$
		A similar analysis as above shows that in each of those minimal sets one has $(E\setminus B)\cap (E\setminus A)\subset S$, that is, $(E\setminus (A\cup B))\subset S$. Thus, the minimal sets are 
		$$
		\{
		(B\cap ( (E\setminus S) \cup A))\sqcup S^*\mid B\in \mathcal{B}_M, A\in\mathcal{B}_{M_0}, A\subseteq S\subseteq (A\cup B)
		\}.
		$$
		It follows that the bases of  $\mathscr{C}_{\vee M_0*}(M)$ are the same as those of $(M\vee M_0)^*$.
	\end{proof}
	
	Let $M\in \Mat(E)$. The \emph{truncation} of $M$, denoted by $T(M)$, is the matroid on $E$ whose bases are the independent sets of $M$ of size $\rk(M)-1$. If $\rk(M)=0$ then $T(M)=M$. As a corollary we obtain that truncations can also be realized as correspondences.
	
	\begin{corollary}\label{cor:trunctation}
		Let $E$ be a finite set. Truncation on $\Mat(M)$ is a correspondence.	
	\end{corollary}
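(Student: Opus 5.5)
The plan is to write truncation as a composition of two functors that \autoref{thm:matroid example} already realizes as correspondences, and then apply \autoref{prop:composition}. The key identity is the standard fact
\[
T(M)=(M+_E t)/t \qquad\text{for every } M\in\Mat(E),
\]
where $t\notin E$ and $M+_E t$ is the free extension of $M$ by $t$.

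First I will verify this identity directly from bases. Let $r=\rk(M)$.
\begin{itemize}
\item If $r\geq 1$, then $t$ is not a loop of $M+_E t$. The bases of $M+_E t$ are the bases of $M$ together with the sets $I\cup\{t\}$, where $I$ is independent in $M$ with $|I|=r-1$. Contracting the non-loop $t$ keeps exactly the bases containing $t$ with $t$ removed. These are the independent sets of $M$ of size $r-1$, i.e. the bases of $T(M)$.
\item If $r=0$, then $t$ lies in the closure of $E$, which has rank $0$, so $t$ is a loop. Contracting a loop gives $(M+_E t)/t=M=T(M)$, in agreement with the convention $T(M)=M$.
\end{itemize}

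Next I will realize both functors as correspondences.
\begin{itemize}
\item By \autoref{thm:matroid example}(5), $\mathscr{C}_{+t}$ on $E\sqcup E^*\sqcup\{t\}$ satisfies $\mathscr{C}_{+t*}(M)=(M+_E t)^*$. This is a matroid on $E^*\sqcup\{t\}$; write $t^*$ for $t$ so that this ground set is $(E\sqcup\{t\})^*$.
\item Apply \autoref{thm:matroid example}(4) to the ground set $E':=E\sqcup\{t\}$ and the element $t$. It gives $\mathscr{C}_{/t}$ on $E'\sqcup (E'\setminus\{t\})^*=E'\sqcup E^*$, which sends a matroid $N$ on $E'$ to $(N/t)^*$ on $E^*$.
\item Transport $\mathscr{C}_{/t}$ along the relabeling $E'\to(E')^*$. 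This is harmless: a correspondence is defined purely combinatorially, so renaming the ground set of the source commutes with the construction in \autoref{def:mat_corr}.
\end{itemize}
The transported matroid lives on $(E')^*\sqcup E^{**}$, and I will identify $E^{**}$ with $E^*$.

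Finally, \autoref{prop:composition} with $E_1=E$, $E_2=E^*\sqcup\{t\}$ and $E_3=E^{**}$ shows that $\mathscr{C}_{/t*}\circ\mathscr{C}_{+t*}$ equals $\mathscr{C}_*$ for
\[
\mathscr{C}=\big((\mathscr{C}_{+t}\oplus\mathbf{B}_{E^{**}})\wedge(\mathbf{B}_E\oplus\mathscr{C}_{/t})\big)\setminus(E^*\sqcup\{t\}).
\]
This $\mathscr{C}$ is a matroid on $E\sqcup E^{**}$ and satisfies $\mathscr{C}_*(M)=((M+_E t)/t)^*=T(M)^*$. That is the sense in which the identity, deletion and contraction are called correspondences in \autoref{thm:matroid example}: the output is the desired matroid on the starred copy of the ground set.

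I expect the main obstacle to be bookkeeping rather than mathematics. One has to check that the free-extension basis description in \autoref{thm:matroid example}(5) is consistent with the degenerate case $\rk(M)=0$, where $t$ must come out as a loop. One also has to keep the relabelings of starred copies coherent when composing. If the rank-zero case caused trouble, I would handle it by directly checking the basis computation of $\mathscr{C}_*(M)$ for $\rk(M)=0$.
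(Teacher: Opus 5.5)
Your proposal is correct and follows the paper's proof: the paper also derives the corollary from \autoref{prop:composition} together with parts (4) and (5) of \autoref{thm:matroid example}, via the identity $T(M)=(M+_E t)/t$. Your additional checks fill in details the paper leaves implicit, and they are accurate: the basis verification of that identity (including the rank-zero case, where $t$ is a loop) and the explicit relabeling of the starred ground sets.
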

	\begin{proof}
		The result follows from \autoref{prop:composition} and \autoref{thm:matroid example} (4) and (5) since $T(M)=(M+_Et)/t$ for every $M$.
	\end{proof}
	
	For any function of finite sets $f:E_1\to E_2$ 	and  $N\in \Mat(E_2)$ one defines the \emph{pullback matroid}  
	$f^{-1}(N)\in \Mat(E_1)$ with rank function
	$\rk_{f^{-1}(N)}(S)=\rk_{N}(f(S))$ for  $S\subseteq E_1$;
	see \cite[\S2.2]{MatroidMorphism}. 
	
	In the following theorem we show that pullback matroids also arise as correspondences. 
	This  theorem is a combinatorial analogue of the algebraic correspondence induced by the graph of a morphism; see \cite[Proposition 16.1.2(c)]{Fulton}.
	
	\begin{theorem}\label{thm:pullback}
		Let $E_1$ and $E_2$ be finite sets and $f:E_1\to E_2$ a function. Define the matroid $\mathscr{C}_{f^{-1}}$ on $E_1\sqcup E_2$ with rank function $$\rk_{\mathscr{C}_{f^{}-1}}(S\sqcup T)=|f(S)\cup T|,\quad S\subseteq E_1, T\subseteq E_2.$$
		Then $\mathscr{C}_{f^{-1}*}(N)=f^{-1}(N)$ for every $N\in \Mat(E_2)$.   
	\end{theorem}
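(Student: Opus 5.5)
We compute $\mathscr{C}_{f^{-1}*}(N)=\big((N\oplus \mathbf{B}_{E_1})\wedge \mathscr{C}_{f^{-1}}\big)\setminus E_2$ directly, as in the proof of \autoref{thm:matroid example}; note that $\mathscr{C}_{f^{-1}*}$ goes from $\Mat(E_2)$ to $\Mat(E_1)$. We use the description of the intersection $P\wedge Q$ whose spanning sets are the intersections of a spanning set of $P$ with a spanning set of $Q$. We also use that the independent sets of a deletion $P\setminus E_2$ are the independent sets of $P$ contained in $E_1$. So it suffices to describe the bases of $P:=(N\oplus \mathbf{B}_{E_1})\wedge \mathscr{C}_{f^{-1}}$ and then restrict to $E_1$.

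\textbf{Step 1: spanning sets.} The spanning sets of $N\oplus\mathbf{B}_{E_1}$ are $A\sqcup E_1$ with $A$ spanning in $N$. The spanning sets of $\mathscr{C}_{f^{-1}}$ are $S\sqcup T$ with $f(S)\cup T=E_2$. Since $\mathscr{C}_{f^{-1}}$ has full rank $|E_2|$, these are exactly the sets whose rank $|f(S)\cup T|$ equals $|E_2|$. Intersecting gives $S\sqcup (A\cap T)$. For fixed $S$ and $A$, the smallest admissible choice is $T=E_2\setminus f(S)$. Hence the spanning sets of $P$ are exactly the supersets of the sets $S\sqcup (A\setminus f(S))$, where $S\subseteq E_1$ and $A$ is spanning in $N$. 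Consequently the bases of $P$ are the inclusion-minimal sets of this form, and we may take $A$ to be a basis of $N$.

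\textbf{Step 2: identify the minimal ones; this is the main obstacle.} We claim the bases of $P$ are exactly the sets $S\sqcup (A\setminus f(S))$ where:
\begin{itemize}
\item $f|_S$ is injective;
\item $f(S)$ is independent in $N$;
\item $A\in\mathcal{B}_N$ with $f(S)\subseteq A$.
\end{itemize}
Each such set has cardinality $\rk(N)$.

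For necessity, suppose $f(s)=f(s')$ for some $s\neq s'$ in $S$. Then dropping $s$ leaves $f(S)$ unchanged, which gives a smaller spanning set. Now suppose instead that $f(S)$ is dependent in $N$. Choose $S'\subsetneq S$ with $f(S')$ a basis of $f(S)$ in $N$, and extend $f(S')$ to a basis $A'$ of $N$. Then $|S'\sqcup (A'\setminus f(S'))|=\rk(N)$. This is strictly smaller than any set $S\sqcup(A\setminus f(S))$ of the original form, because $|S|+|A\setminus f(S)|\ge |S|+\rk(N)-\rk_N(f(S))>\rk(N)$. Since all bases of $P$ have the same size, a minimal set cannot have dependent or non-injective $f(S)$. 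The same inequality, with equality exactly under the three conditions above, shows that the $A$ in a minimal set must contain $f(S)$. It also shows that all sets of the described form have the minimum size $\rk(N)$. The care needed here is to compare cardinalities rather than inclusions, using that $P$ is a matroid so that minimal spanning sets are equicardinal.

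\textbf{Step 3: restrict to $E_1$.} The independent sets of $P$ contained in $E_1$ are the subsets of the sets $S$ from Step 2. These are precisely the sets $S\subseteq E_1$ with $f|_S$ injective and $f(S)$ independent in $N$, since any such $S$ arises by choosing a basis $A\supseteq f(S)$. This is exactly the condition $\rk_N(f(S))=|S|$, i.e. independence in $f^{-1}(N)$. Hence $P\setminus E_2$ and $f^{-1}(N)$ have the same independent sets, which gives $\mathscr{C}_{f^{-1}*}(N)=f^{-1}(N)$.
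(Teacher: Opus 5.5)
Your proof is correct and follows the paper's route. Both compute the bases of $(\mathbf{B}_{E_1}\oplus N)\wedge\mathscr{C}_{f^{-1}}$ as the minimal intersections, identify them as the sets $S\sqcup(A\setminus f(S))$ with $f|_S$ injective and $f(S)\subseteq A\in\mathcal{B}_N$, and then delete $E_2$. The only differences are that you start from spanning sets of $\mathscr{C}_{f^{-1}}$ and use a cardinality count (which also makes the sufficiency direction explicit), whereas the paper starts from the bases $S\sqcup(E_2\setminus f(S))$ with $f|_S$ injective and removes elements $e\in S$ with $f(e)\notin B$.
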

	\begin{proof}
		The bases of $\mathscr{C}_{f^{-1}}$ are given by the sets 
		$$
		\{
		S\sqcup (E_2\setminus f(S))
		\mid 
		S\subseteq E_1, f|_{S} \text{ is injective}
		\}.
		$$
		Therefore, the bases of $(\mathbf{B}_{E_1}\oplus N)\wedge \mathscr{C}_{f^{-1}}$ are the minimal sets with respect to inclusion in the collection 
		$$
		\{S\sqcup (B\cap (E_2\setminus f(S)))
		\mid 
		S\subseteq E_1, f|_{S} \text{ is injective}, B\in \mathcal{B}_N
		\}.
		$$
		In these sets, removing an element  $e$ from $S$ such that $f(e)\not\in B$ drops the cardinality of $S$ while keeping the one of  $B\cap (E_2\setminus f(S))$ the same. One concludes that the minimal sets are 
		$$
		\{S\sqcup (B\cap (E_2\setminus f(S)))
		\mid 
		S\subseteq E_1, f|_{S} \text{ is injective}, f(S)\subseteq B,  B\in \mathcal{B}_N
		\}.
		$$
		Thus, the independent sets of  $\mathscr{C}_{f^{-1}*}(N)$ are 
		$$
		\{
		S\subseteq E_1\mid f|_S\text{ is injective}, f(S)  \text{ is independent in }N
		\},
		$$
		which are the independent sets of $f^{-1}(N)$.
	\end{proof}
	
	In the next proposition we note that better functorial properties arise with better rank behavior under $\mathscr{C}_*$.

	\begin{proposition}\label{prop:uniform_preserving_corrs}
		Suppose $\mathscr{C}$ is a matroid on $E\sqcup E^*$ such that $\mathscr{C}_*$ fixes $U_{0,n}$ and $U_{n,n}$.
		\begin{enumerate}
			\item $\mathscr{C}_*$ is rank preserving;
			\item $\mathscr{C}_*$ preserves weak maps;
			\item $\mathscr{C}_*$ fixes all uniform matroids;
			\item The bases of $\mathscr{C}$ are determined by the image of $\mathscr{C}_*$ on matroids with exactly one basis.
		\end{enumerate}
	\end{proposition}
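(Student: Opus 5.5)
The plan is to first pin down what the two hypotheses say about $\mathscr{C}$, and then reduce everything to a rank count for the intersection together with an explicit description of $\mathscr{C}_*$ on matroids with a single basis. Throughout I use that the spanning sets of $P\wedge Q$ are exactly the sets $S\cap X$ with $S$ spanning in $P$ and $X$ spanning in $Q$. I also use that $\rk(P\wedge Q)=|E|-\rk(P^\vee\vee Q^\vee)$, combined with the union rank formula $\rk(P^\vee\vee Q^\vee)=\min_A\big(\rk_{P^\vee}(A)+\rk_{Q^\vee}(A)+|E\setminus A|\big)$.

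\textbf{Step 1: the hypotheses on $\mathscr{C}$.} If $L\subseteq E$ consists of loops of $M$, then the spanning-set description shows that $(M\oplus\mathbf{B}_{E^*})\wedge\mathscr{C}$ is $\mathscr{C}/L$ with $L$ adjoined as loops, intersected with the rest. In particular $\mathscr{C}_*(U_{0,n})=\mathscr{C}/E$. So $\mathscr{C}_*(U_{0,n})=U_{0,n}$ means exactly that $E$ spans $\mathscr{C}$. Likewise $\mathscr{C}_*(U_{n,n})=\mathscr{C}\setminus E$, so the second hypothesis means exactly that $E^*$ is independent. Since $\rk(\mathscr{C}/E)=0$ forces $\rk\mathscr{C}\le|E|=n$, the hypotheses say: $\rk\mathscr{C}=n$, $E^*$ is a basis of $\mathscr{C}$, and $E$ spans $\mathscr{C}$. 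Dually, $E$ is a basis of $\mathscr{C}^\vee$ and $E^*$ is independent in $\mathscr{C}^\vee$.

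\textbf{Step 2: rank preservation (1).} Put $P=M\oplus\mathbf{B}_{E^*}$, so $P^\vee=M^\vee\oplus U_{0,E^*}$, and let $r=\rk M$. For any $A\subseteq E\sqcup E^*$:
\begin{itemize}
\item $\rk_{M^\vee}(A\cap E)\ge|A\cap E|-r$;
\item $\rk_{\mathscr{C}^\vee}(A)\ge|A\cap E^*|$, because $E^*$ is independent in $\mathscr{C}^\vee$.
\end{itemize}
Plugging these into the union formula gives $\rk(P^\vee\vee\mathscr{C}^\vee)\ge 2n-r$. Taking $A=E\sqcup E^*$ gives equality, so $\rk(P\wedge\mathscr{C})=r$. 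Moreover $E^*=(B\sqcup E^*)\cap E^*$, with $B\in\mathcal{B}_M$, is an intersection of spanning sets of $P$ and $\mathscr{C}$, hence spans $P\wedge\mathscr{C}$. Therefore deleting $E$ keeps the rank, and $\rk\mathscr{C}_*(M)=r$.

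\textbf{Step 3: single-basis matroids and (4).} Let $M_B$ denote the matroid on $E$ whose only basis is $B$. By the loop observation in Step 1, $\mathscr{C}_*(M_B)=(\mathscr{C}/(E\setminus B))\setminus B$. By (1) this has rank $|B|$, which forces $E\setminus B$ to be independent in $\mathscr{C}$. Hence the bases of $\mathscr{C}_*(M_B)$ are exactly the $T\subseteq E^*$ with $(E\setminus B)\sqcup T\in\mathcal{B}_{\mathscr{C}}$. Every basis of $\mathscr{C}$ has the form $S\sqcup T$ with $S\subseteq E$, so taking $B=E\setminus S$ gives
\[
\mathcal{B}_{\mathscr{C}}=\{(E\setminus B)\sqcup T\mid B\subseteq E,\ T\in\mathcal{B}_{\mathscr{C}_*(M_B)}\},
\]
which is (4).

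\textbf{Step 4: weak maps (2).} This is the main obstacle. I would first prove, using the spanning-set description of $P\wedge\mathscr{C}$ and Step 2, that
\[
\mathcal{B}_{\mathscr{C}_*(M)}=\bigcup_{B\in\mathcal{B}_M}\mathcal{B}_{\mathscr{C}_*(M_B)}.
\]
The inclusion ``$\supseteq$'' follows from rank equality and $S\cap X$ arguments. The reverse inclusion requires showing that a minimal spanning set of the form $(B\sqcup E^*)\cap X$ inside $E^*$ can be taken with $X=(E\setminus B)\sqcup T$, presumably by exchanging elements of $X\cap(E\setminus B)$.

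Now let $M\to N$ be a weak map and let $T\in\mathcal{B}_{\mathscr{C}_*(N)}$, coming from $B'\in\mathcal{B}_N$. Since $B'$ is independent in $M$, choose $B\in\mathcal{B}_M$ with $B'\subseteq B$. Then $(E\setminus B')\sqcup T$ is a basis of $\mathscr{C}$ containing the independent set $E\setminus B$. Using that $E$ spans $\mathscr{C}$, basis exchange should produce $T'\supseteq T$ with $(E\setminus B)\sqcup T'\in\mathcal{B}_{\mathscr{C}}$. Then $T$ is independent in $\mathscr{C}_*(M)$, as required. If the union formula resists, I would fall back on factoring the weak map into a rank-preserving weak map followed by truncations, and treat the two pieces separately.

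\textbf{Step 5: uniform matroids (3).} For $|B|=r$ there is a weak map $U_{r,n}\to M_B$. By (1) and (2), and equality of ranks, $\mathcal{B}_{\mathscr{C}_*(M_B)}\subseteq\mathcal{B}_{\mathscr{C}_*(U_{r,n})}$. Conversely, let $T\subseteq E^*$ with $|T|=r$. Then $T$ is independent, as $E^*$ is a basis, and since $E$ spans $\mathscr{C}$ we can extend $T$ by some $S\subseteq E$ with $|S|=n-r$ to a basis $S\sqcup T$. Taking $B=E\setminus S$ gives $T\in\mathcal{B}_{\mathscr{C}_*(M_B)}$. Hence $\mathscr{C}_*(U_{r,n})=U_{r,n}$.
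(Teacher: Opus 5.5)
Your Step 4, on which both (2) and your (3) rest, has a genuine gap, though an easily repaired one. It has two concrete defects.

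\textbf{The key identity is not proved.} You state $\mathcal{B}_{\mathscr{C}_*(M)}=\bigcup_{B\in\mathcal{B}_M}\mathcal{B}_{\mathscr{C}_*(M_B)}$, which is exactly the paper's description of $\mathcal{B}_{\mathscr{C}_*(M)}$, but the reverse inclusion is left at ``presumably by exchanging elements''. No exchange is needed; a cardinality count suffices.
\begin{itemize}
\item By Step 2, $E^*$ spans $(M\oplus\mathbf{B}_{E^*})\wedge\mathscr{C}$. So a basis $T$ of $\mathscr{C}_*(M)$ is a basis of the intersection contained in $E^*$.
\item Write $T=S\cap X$ with $S$ spanning in $M\oplus\mathbf{B}_{E^*}$ and $X$ spanning in $\mathscr{C}$. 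Shrink them to bases $B\sqcup E^*$ and $X_0$. The intersection stays spanning and lies inside $T$, so by minimality $T=(B\sqcup E^*)\cap X_0$.
\item Hence $X_0\cap B=\emptyset$ and $X_0\cap E^*=T$. Since $|T|=r$ and $|X_0|=n$, we get $|X_0\cap E|=n-r=|E\setminus B|$. This forces $X_0=(E\setminus B)\sqcup T$.
\end{itemize}

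\textbf{The weak-map step cites the wrong fact.} You write ``using that $E$ spans $\mathscr{C}$''. Augmenting the independent set $(E\setminus B)\sqcup T$ from $E$ would add elements of $B$ and destroy the shape $(E\setminus B)\sqcup T'$ you need. What works is that $E^*$ is a basis of $\mathscr{C}$, which you showed in Step 1. Augment $(E\setminus B)\sqcup T\subseteq(E\setminus B')\sqcup T$ from $E^*$ to a basis $(E\setminus B)\sqcup T'$ with $T\subseteq T'\subseteq E^*$. The identity then gives $T'\in\mathcal{B}_{\mathscr{C}_*(M)}$, so $T$ is independent in $\mathscr{C}_*(M)$.

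With these repairs, the rest of your proposal is correct, and in places it takes a different route from the paper.
\begin{itemize}
\item Step 1 reaches the paper's starting point. Note that $E$ is in fact a basis, since $\rk\mathscr{C}=n=|E|$; this makes the independence of $E\setminus B$ in Step 3 immediate.
\item Step 3 gives the same single-basis description and the same reconstruction of $\mathcal{B}_{\mathscr{C}}$ as the paper's proof of (4).
\item For (1), you use the matroid-union rank formula instead of the paper's exhibition of a basis of $\mathscr{C}$ meeting $E$ in $E\setminus S$. Your version makes explicit the lower bound $\rk\big((M\oplus\mathbf{B}_{E^*})\wedge\mathscr{C}\big)\ge r$, which the paper's one-line argument leaves implicit.
\item For (2), your repaired argument handles all weak maps at once. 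The paper instead treats rank-preserving weak maps, where $\mathcal{B}_N\subseteq\mathcal{B}_M$ visibly gives $\mathcal{B}_{\mathscr{C}_*(N)}\subseteq\mathcal{B}_{\mathscr{C}_*(M)}$. It then invokes an external factorization of weak maps into rank-preserving weak maps and truncations, plus compatibility of $\mathscr{C}_*$ with truncation asserted from the same basis description. That is essentially your fallback. Your direct route is more self-contained.
\end{itemize}
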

	\begin{proof}
		First we glean some information about the bases of $\mathscr{C}$ from which the rest  follows. Since $\mathscr{C}_*$ fixes $U_{0,n}$, ${\mathscr{C}}$ has a basis that is contained in $E$. Similarly, since $\mathscr{C}_*$ fixes $U_{n,n}$, ${\mathscr{C}}$ has a basis that contains $E^*$. Therefore, both $E$ and $E^*$ are bases of $\mathscr{C}$, and the rank of $\mathscr{C}$ is $n$. By the strong exchange property, we deduce that for any $S\subseteq E^*$ (resp. $S\subseteq E$), there exists a basis $B$ of $\mathscr{C}$ such that $B\cap E^*=S$ (resp. $B \cap E = S$).
		
		For (1), given any matroid $M$ with basis $S\subseteq E$, we can find a basis $B$ of $\mathscr{C}$ such that $B \cap E = E\setminus S$, so $B \cap E^*$ is be a basis of $\mathscr{C}_*(M)$ of equal cardinality to $S$.
		
		For $(2)$, we first show that $\mathscr{C}_*$ preserves rank-preserving weak maps. By our analysis in the first paragraph, we obtain:
		\begin{align}\label{eq:bases of nice correspondence}
			\mathcal{B}_{\mathscr{C}_*(M)} = \{T\subseteq E^*\mid \text{there exists}\; S \in \mathcal{B}_M \text{ such that } %(E\setminus S)\cap T=\emptyset, 
			(E\setminus S)\cup T \in \mathcal{B}_{\mathscr{C}}\}.
		\end{align}
		From this description, it follows that $\mathcal{B}_N\subseteq \mathcal{B}_M$ implies $\mathcal{B}_{\mathscr{C}_*(N)}\subseteq \mathcal{B}_{\mathscr{C}_*(M)}$. Thus, $\mathscr{C}_*$ preserves rank-preserving weak maps. By \cite[Proposition 4.8 (a)]{WeakMaps}, every weak map is the composition of rank-preserving weak maps and truncations. Since truncations are  preserved by $\mathscr{C}_*$, again by the description in \autoref{eq:bases of nice correspondence},  it follows that $\mathscr{C}_*$ preserves weak maps.
		
		For (3), given any set $S\subseteq E^*$ of size $r$, we may find a basis $B$ of $\mathscr{C}$ which agrees with $S$ on $E^*$. Then $B\cap E$ has size $n-r$, hence $E\setminus (B\cap E)$ is a basis of $U_{r,n}$, so $S$ is a basis of $\mathscr{C}_*(U_{r,n})$. 
		
		Finally for $(4)$ we specialize  \autoref{eq:bases of nice correspondence}. If $S\subseteq E$ and $\mathbf{B}_S$ is the matroid on $E$ with only one basis $S$, then
		\[
		\mathcal{B}_{\mathscr{C}_*(\mathbf{B}_S)}=\{T\subseteq E^*\mid (E\setminus S)\cup T\in \mathcal{B}_{\mathscr{C}}\}.
		\]
		Thus, we have
		\[
		\mathcal{B}_{\mathscr{C}}=\bigcup_{S\subseteq E} \{(E\setminus S)\cup T\mid T\in \mathcal{B}_{\mathscr{C}_*(\mathbf{B}_S)}\}.
		\]
		In particular, $\mathcal{B}_{\mathscr{C}}$ is determined by the images $\mathscr{C}_*(\mathbf{B}_S)$.
	\end{proof}
	
	\begin{remark}
		The four implications of the proposition are false in general for a   matroid $\mathscr{C}$ on $E\sqcup E^*$. For (1), (2), and (3), consider  \autoref{ex:weak maps not preserved}. In that example, $\rk(N) = 1 < 2 = \rk(\mathscr{C}_*(N))$, weak maps are not preserved, and $U_{0,2} \neq \mathscr{C}_*(U_{0,2})$.

		For statement (4), switching a coloop of $\mathscr{C}$ contained in $E$ to a loop or vice versa preserves the image of all single basis matroids, but in general does not preserve the others. 
		For example, let $E_1=\{1,2\},$ $E_2=\{1^*,2^*\}$, and compare the matroids $\mathscr{C}$ and $\mathscr{C}'$ defined by  $$\mathcal{B}_{\mathscr{C}}=\{\{2,1^*\},\{2,2^*\},\{1^*,2^*\}\}$$ and $$\mathcal{B}_{\mathscr{C}'}=\{\{1,2,1^*\},\{1,2,2^*\},\{1,1^*,2^*\}\}.$$ Then $\mathscr{C}_*(M)=\mathscr{C}'_*(M)$ for all matroids $M$ on $E_1$ with a single basis, but $\mathscr{C}_*(U_{1,2})\neq \mathscr{C'}_*(U_{1,2})$.  %Thus only knowing the images of single basis matroids is not enough to determine the whole map. 
	\end{remark}

	\begin{example}
		One might wonder whether every functor satisfying the conclusions of
		\autoref{prop:uniform_preserving_corrs} (1)-(3) must arise from a correspondence. This is not
		the case as the following example shows.
		
		Let \(E=\{1,2,3\}\), and let \(L\) be the rank-one matroid on \(E^*\) with bases
		\[
		\{\{2^*\},\{3^*\}\}.
		\]
		Thus \(1^*\) is a loop in \(L\). So $L^\vee$ is the rank-two
		matroid on \(E^*\) with bases
		\[
		\{\{1^*,2^*\},\{1^*,3^*\}\}.
		\]
		Define a functor
		\[
		\phi:\operatorname{Mat}(E)\longrightarrow \operatorname{Mat}(E^*)
		\]
		by
		\[
		\phi(M)=
		\begin{cases}
			U_{0,3}, & \operatorname{rk}(M)=0,\\
			U_{1,3}, & M=U_{1,3},\\
			L, & \operatorname{rk}(M)=1 \text{ and } M\neq U_{1,3},\\
			U_{2,3}, & M=U_{2,3},\\
			L^\vee, & \operatorname{rk}(M)=2 \text{ and } M\neq U_{2,3},\\
			U_{3,3}, & \operatorname{rk}(M)=3.
		\end{cases}
		\]
		One checks that \(\phi\) preserves matroid quotients,
		preserves rank, preserves weak maps, and fixes all uniform matroids.
		
		Suppose, for contradiction, that \(\phi=\mathscr{C}_*\) for some matroid \(\mathscr{C}\) on
		\(E\sqcup E^*\). By \autoref{prop:uniform_preserving_corrs} (4), the bases of \(\mathscr{C}\) are
		determined by the images of the one-basis matroids. 
		%More explicitly, if \(B_S\) denotes the matroid on \(E\) with unique basis \(S\), then
		%\[
		%\mathcal B(C)=
		%\bigcup_{S\subseteq E}
		%\{(E\setminus S)\cup T : T\in \mathcal B(\phi(B_S))\}.
		%\]
		In particular, since \(\phi(U_{0,3})=U_{0,3}\), the set \(E=\{1,2,3\}\) is a
		basis of \(\mathscr{C}\). 
		
		For $S\subset E$ let $\mathbf{B}_{S}$ be the matroid on $E$ with only one basis $S$. 
		Since \(\phi(\mathbf{B}_{\{1,2\}})=L^\vee\), the set
		$\{3,1^*,2^*\}$
		is also a basis of \(\mathscr{C}\). 
		Now apply the basis exchange axiom to the two bases
		$\{3,1^*,2^*\}$
		and
		$\{1,2,3\}$.
		Removing \(2^*\) from the basis $\{3,1^*,2^*\}$, the basis exchange axiom requires that either
		$\{3,1^*,1\}$
		or
		$\{3,1^*,2\}$
		is a basis of \(\mathscr{C}\). But since $\phi(\mathbf{B}_{\{1\}})=\phi(\mathbf{B}_{\{2\}})=L$ , neither of these sets is a basis of \(\mathscr{C}\). This contradiction shows that \(\phi\) is not induced by a matroid correspondence.
		
		To summarize, we have the following proper inclusions of functors $\Mat(E) \to \Mat(E^*)$:
		\begin{align*}
			\left\{\parbox{8em}{Correspondences preserving $U_{0,n}$ and $U_{n,n}$}\right\} \subsetneq\left\{\parbox{10em}{Functors preserving rank, weak maps, and uniform matroids}\right\} \subsetneq \left\{\text{Functors}\right\}.
		\end{align*}
	\end{example}

	\begin{remark}
		Matroid duality cannot be realized by matroid correspondences since it reverses matroid quotients instead of preserving them. Nevertheless, %following the principle of  \autoref{prop:uniform_preserving_corrs} (4), 
		if we were to construct a subset of $2^{E\sqcup E^*}$ to realize duality (not necessarily satisfying the basis exchange axiom), the subset would be of the form
		\[
		\{S\cup S^*\mid S\subseteq E\},
		\]
		which is the feasible set of a Delta-matroid. We expect that there is a notion of Delta matroid correspondence defined using Delta matroid union \cite[Corollary 6.2]{DeltaMatroidUnion} and contractions. We will not pursue this direction further here.
	\end{remark}

	\section{Background on polymatroids}\label{sec:background on polymatroids}
	\begin{definition}
		A \emph{polymatroid} $P$ on a finite set $E = \{1, \ldots, n\}$ is the data of a \emph{rank} function $\rk: 2^E \to \Z_{\geq 0}$, i.e., $\rk$ is
		\begin{description}
			\item[normalized] $\rk(\emptyset) = 0$,
			\item[increasing] if $A \subseteq B$, then $\rk(A) \leq \rk(B)$, and
			\item[submodular] if $A, B \subseteq E$, then $\rk(A \cup B) + \rk(A \cap B) \leq \rk(A) + \rk(B)$.
		\end{description}
		
		The \emph{rank} of $P$ is $\rk(P) \coloneqq  \rk(E)$. 
		A \emph{basis} of $P$ is a multiset $ \beta \in \N^E$ satisfying $|\beta| = \rk(P)$ and $|\pi_A(\beta)| \leq \rk(A)$ for all $A \subsetneq E$, where $\pi_A$ denotes the projection, $\pi_A: \N^E\to \N^A$. We denote the set of bases of $P$ by $\mathcal{B}_P$. 
		
		A \emph{cage} for a polymatroid $P$ on $E$ is $\alpha \in \N^E$ such that $\rk(i) \leq \alpha_i$ for all $i \in E$.
		The pair $(P, \alpha)$ is a \emph{caged polymatroid}. 
	\end{definition}
	
	\begin{definition}
		An \emph{M-convex set} in  \[
		\Delta^r_n\coloneq \{\alpha\in \Z_{\geq 0}^n\mid \alpha_1+\cdots +\alpha_n=r\},
		\]
		is a nonempty subset $N\subset \Delta^r_n$ such that for every $u,v\in N$ and $i\in \{1,\ldots, n\}$ one has: if $u_i<v_i$, then there exists $j\in \{1,\ldots, n\}$ such that $u_j>v_j$ and $u-e_j+e_i, v-e_i+e_j\in N$, where $\{e_1,\ldots, e_n\}$ is the standard basis of $\Z^n$.
	\end{definition}

	\begin{remark}
		Given any polymatroid $P$, $\mathcal{B}_P$ is always an M-convex set, see \cite[Subsection 1.2]{BHK+25}). Conversely, given an M-convex set in 
		$
		\Delta^r_n
		$, 
		there exists a unique polymatroid $P$ such that $\mathcal{B}_P$ is equal to the given M-convex set.
	\end{remark}
	
	We now define some basic operations of polymatroids analogous to those of matroids. 
	
	\begin{definition}
		For a subset $D\subseteq E$, the \emph{deletion} of $P$ by $D$, denoted by $P\setminus D$ is the polymatroid on $E\setminus D$ whose rank function is the restriction of the rank function of $P$ to subsets of $E\setminus D$. 
		
		Let $E_1$ and $E_2$ be disjoint finite sets. If $P_i$ is a polymatroid on $E_i$ for $i=1,2$, then their \emph{direct sum}, denoted by $P_1\oplus P_2$, is the polymatroid on $E_1\cup E_2$ whose rank function is defined by 
		\[
		\rk_{P_1\oplus P_2}(S_1\cup S_2)=\rk_{P_1}(S_1)+\rk_{P_2}(S_2)
		\]
		where $S_i\subseteq E_i$ for $i=1, 2$. 
		
		Let $(P, \alpha)$ be a caged polymatroid. It follows from the definition that $\{\alpha-B\mid B\in \mathcal{B}_P\}$ is also an M-convex set. Furthermore, the corresponding polymatroid also has $\alpha$ as a cage. This polymatroid is denoted by $P^\vee$, and $(P^\vee, \alpha)$ is called the \emph{dual caged polymatroid of} $(P, \alpha)$; see \cite[Section 3.1]{EL24} for more discussion about polymatroid duality.
		
		If $(P, \alpha)$ is a caged polymatroid on $E$, then the deletion $P\setminus D$ inherits a natural cage given by the restriction $\alpha|_{E\setminus D}$. Similarly, if $(P_1, \alpha_1)$ and $(P_2, \alpha_2)$ are caged polymatroids, then their direct sum $P_1\oplus P_2$ has a natural cage $(\alpha_1, \alpha_2)$. Consequently, the notions of deletion and direct sum extend naturally to caged polymatroids.
	\end{definition}

	In \cite{CHL+22} and \cite{CSW}, it is proved that there is a natural bijection between caged polymatroids and multisymmetric matroids. 
	
	\begin{definition}
		Let $\widetilde E = E^1 \sqcup \cdots \sqcup E^n$ be a finite set. A \emph{multisymmetric matroid} is a matroid $M$ on $\widetilde E$ whose rank function is invariant under the natural action of the product of the permutation groups $\prod_{i=1}^n \mathfrak S_{E^i}$.
	\end{definition}
	
	\begin{thmdfn}\label{lift}{\cite[Theorem/Definition 2.6]{CSW}}
		Fix a multiset $\alpha = (\alpha_1, \ldots, \alpha_n)$, and let $\widetilde E = E^1 \sqcup \cdots \sqcup E^n$ be a set with $|E^i| = \alpha_i$.
		There is a bijection
		\begin{align*}
			\{\text{caged polymatroids $(P, \alpha)$}\} &\overset{\sim}{\longrightarrow} \{\text{multisymmetric matroids on $\widetilde E$}\} \\
			(P, \alpha) &\longmapsto \widetilde P,
		\end{align*}
		where $\widetilde P$ is the \emph{multisymmetric lift} of $P$, defined by
		\[
		\rk_{\widetilde P}(A) \coloneqq \min \{\rk_P(B) + |A \setminus \cup_{i \in B} E^i| \mid B \subseteq E \}, \quad A \subseteq \widetilde E.
		\]
	\end{thmdfn}
	
	\begin{example}\label{ex:boolean_poly}
		A caged polymatroid $(P, \alpha)$ on $E$ is called \emph{Boolean}, if for any $S\subseteq E$, $\rk(S)=\sum_{i\in S}\alpha_i$. A caged polymatroid is Boolean if and only if its multisymmetric lift is a Boolean matroid. 
	\end{example}

	%\section{Polymatroid intersection}
	Next, we introduce polymatroid intersections, which  can only be defined for polymatroids sharing the same cage. 
	
	Given a multiset $\alpha\in \Z_{\geq 0}^n$ and subsets $\beta, \gamma\leq \alpha$, that is, $\beta, \gamma\in \Z_{\geq 0}^n$ satisfying $\beta_i, \gamma_i\leq \alpha_i$ for any $1\leq i\leq n$. We define the \emph{excess intersection} of $\beta$ and $\gamma$ relative to $\alpha$, denoted by $\beta\cap_{\alpha}\gamma$, by
	\[
	(\beta\cap_{\alpha}\gamma)_i=\max\{\beta_i+\gamma_i-\alpha_i, 0\}\quad \text{for any $1\leq i\leq n$.}
	\]
	\begin{thmdfn}\label{thmdfn:polymatroid intersection}
		Given $\alpha\in \Z_{\geq 0}^n$, and caged polymatroids $(P, \alpha)$ and $(Q, \alpha)$, the minimal multisets in the collection
		\[
		\{\beta_P\cap_\alpha \beta_Q \mid \beta_P\in \mathcal{B}_P \;\;\text{and}\;\; \beta_Q\in \mathcal{B}_Q\}
		\]
		form an M-convex set. The corresponding polymatroid is called the \emph{intersection} of caged polymatroids $(P, \alpha)$ and $(Q, \alpha)$, and denoted by $(P, \alpha)\wedge
		(Q, \alpha)$ or $P\wedge_\alpha Q$. The multiset $\alpha$ is the default cage of $P\wedge_\alpha Q$. 
	\end{thmdfn}
	\begin{proof}
		This is essentially proved in \cite{EL24}. 
		In \cite[Page 4226]{EL24}, it is shown that the basis polytope of a union of two polymatroids is M-convex. 
		By duality, it follows that the basis polytope of the intersection of two polymatroids with the same cage is M-convex. 
	\end{proof}

	When $\alpha=(1, \dots, 1)$, an $\alpha$-caged polymatroid is the same as a matroid, and the intersection as caged polymatroids is equal to matroid intersection.

	\begin{thmdfn}\label{thmdef:cap product}
		Let $P$ and $Q$ be polymatroids defined over the same set $E=\{1, \dots, n\}$. The minimal multisets in the collection
		\begin{equation}\label{eq:bases of cap product}
			\{B_Q-B_{P}\mid B_Q\in \mathcal{B}_Q\;\;\text{and}\;\; B_{P}\in \mathcal{B}_{P}\}
		\end{equation}
		form an M-convex set, where the subtraction is the \emph{multiset subtraction} defined by
		\[
		(B_Q-B_{P})_i=\max\{(B_Q)_i-(B_{P})_i, 0\}.
		\]
		This M-convex set defines a polymatroid, which is called the \emph{cap product} of $P$ and $Q$, and denoted by $P\frown Q$. 
	\end{thmdfn}
	\begin{proof}
		Choose a sufficiently large multiset $\alpha\in \Z_{\geq 0}^n$ which is a cage for both $P$ and $Q$. Notice that for $\beta, \gamma\in \Z_{\geq 0}^n$ with $\beta, \gamma\leq \alpha$, we have
		\[
		\beta\cap_\alpha \gamma=\beta - (\alpha -\gamma),
		\]
		where the subtractions are multiset subtractions. 
		Thus, the minimal multisets in \eqref{eq:bases of cap product} is equal to the set of bases of
		\[
		P^\vee\wedge_\alpha Q,
		\]
		where $(P^\vee, \alpha)$ is the dual caged polymatroid of $(P, \alpha)$. In particular, the minimal multisets in \eqref{eq:bases of cap product} form an M-convex set.
	\end{proof}
	
	In the following proposition we observe that taking caged polymatroid/matroid intersection commutes with multisymmetric lift.
	
	\begin{proposition}[{\cite[page 4226]{EL24}}]\label{prop:multisymmetric lift and intersection}
		Given two polymatroids $P$ and $Q$ with the same cage $\alpha$, we have the following equality  of multisymmetric matroids
		\[
		\widetilde{P\wedge_\alpha Q}= \widetilde{P}\wedge \widetilde{Q}. 
		\]
	\end{proposition}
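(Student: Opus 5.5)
We compare bases on both sides; both are matroids on $\widetilde E=E^1\sqcup\cdots\sqcup E^n$ with $|E^i|=\alpha_i$. The key is a description of the bases of a multisymmetric lift: a set $A\subseteq\widetilde E$ is a basis of $\widetilde P$ if and only if its \emph{type} $\tau(A)\coloneqq(|A\cap E^1|,\dots,|A\cap E^n|)$ is a basis of $P$. We derive this from the rank formula in \autoref{lift}. Taking $B=\emptyset$ shows $\rk_{\widetilde P}(A)\le |A|$. Taking $B$ to be the set of indices $i$ with $A\cap E^i\neq\emptyset$ minimizing the other terms shows that $\rk_{\widetilde P}(A)=|A|$ exactly when $|\pi_S(\tau(A))|\le \rk_P(S)$ for all $S\subseteq E$. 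Here we use that the $B$ achieving the minimum may be assumed to satisfy $\rk_P(B)\le\sum_{i\in B}|A\cap E^i|$. In particular $\rk(\widetilde P)=\rk(P)$, and the bases correspond as claimed.

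Next we describe the bases of $\widetilde P\wedge\widetilde Q$, using the matroid case $\alpha=(1,\dots,1)$ of \autoref{thmdfn:polymatroid intersection}. There they are the minimal sets among $\{A\cap A'\mid A\in\mathcal B_{\widetilde P},\,A'\in\mathcal B_{\widetilde Q}\}$, and the intersection is pure. For fixed types $\beta=\tau(A)$ and $\gamma=\tau(A')$, choosing $A\cap E^i$ and $A'\cap E^i$ inside $E^i$ with $|E^i|=\alpha_i$ gives an intersection of size between $\max(\beta_i+\gamma_i-\alpha_i,0)$ and $\min(\beta_i,\gamma_i)$. Every value in this range is attained by suitable positions, and the positions can be chosen freely by multisymmetry. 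So the minimal intersections have types exactly $\beta\cap_\alpha\gamma$, with $\beta,\gamma$ ranging over the pairs making this minimal among all excess intersections. Conversely, any subset of type equal to a minimal element of $\{\beta\cap_\alpha\gamma\}$ arises as $A\cap A'$ with the two sets overlapping minimally in each block.

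Thus the bases of $\widetilde P\wedge\widetilde Q$ are exactly the subsets $C\subseteq\widetilde E$ whose type is a basis of $P\wedge_\alpha Q$. Two points need checking here. First, a set $C=A\cap A'$ whose type is not minimal is not minimal: we can shrink it by realigning positions within a block. Second, minimality under inclusion of sets matches minimality of types under the componentwise order. Both sides are uniform in the sense of having a single size, by the M-convexity in \autoref{thmdfn:polymatroid intersection}, so minimal elements are simply the elements of least size, which makes the comparison clean.

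Finally, $P\wedge_\alpha Q$ has cage $\alpha$, so its lift lives on the same $\widetilde E$. By the first step, its bases are the subsets whose type lies in $\mathcal B_{P\wedge_\alpha Q}$, and this agrees with the description just obtained. The main obstacle is the first step, namely extracting the clean basis description of $\widetilde P$ from the min-formula. If that becomes messy, we would instead cite the bijection of \cite{CSW}, whose bases are known to be exactly the type-preimages.
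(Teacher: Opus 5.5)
Your proof is correct, but it takes a different route from the paper. The paper gives no argument of its own and only cites \cite[p.~4226]{EL24}. Judging from the surrounding text and the proof of Theorem/Definition~\ref{thmdfn:polymatroid intersection}, that reference works with unions (capped sums) of polymatroids, and the intersection statement then follows by passing to caged duals.

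You instead compare bases directly, in three steps. First, the bases of $\widetilde P$ are the type-preimages of $\mathcal{B}_P$. Second, the bases of $\widetilde P\wedge\widetilde Q$ are the inclusion-minimal sets $A\cap A'$. Third, a block-by-block count identifies these with the sets whose type is a minimal excess intersection: the overlap in $E^i$ can be any value between $\max(\beta_i+\gamma_i-\alpha_i,0)$ and $\min(\beta_i,\gamma_i)$.

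The cited route is shorter and fits a uniform ``lift commutes with union and with duality'' picture. Yours is self-contained and elementary, and it gives a little more. The M-convexity you import from Theorem/Definition~\ref{thmdfn:polymatroid intersection} is not needed. Your two checks already show that $C$ is inclusion-minimal if and only if $\tau(C)$ is a minimal excess intersection. Those checks are realigning positions within a block, and realizing any strictly smaller excess intersection as some $A''\cap A'''\subsetneq C$; combine them with the inequality $\tau(A\cap A')\geq\tau(A)\cap_\alpha\tau(A')$. Since $\widetilde P\wedge\widetilde Q$ is multisymmetric, the bijection of \cite{CSW} then recovers that M-convexity from the matroid case.

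There are two small cleanups. First, your derivation of the basis description of $\widetilde P$ is muddled, but it is a one-liner. The $B$-term in Theorem/Definition~\ref{lift} equals $|A|+\rk_P(B)-\sum_{i\in B}|A\cap E^i|$, so $A$ is independent if and only if $\sum_{i\in B}|A\cap E^i|\leq\rk_P(B)$ for all $B\subseteq E$. Second, $\rk(\widetilde P)=\rk(P)$ uses the cage. Every $\beta\in\mathcal{B}_P$ satisfies $\beta_i\leq\rk_P(\{i\})\leq\alpha_i$, so it actually occurs as the type of a subset of $\widetilde E$.
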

	
	Similarly, we can define union of caged polymatroids using capped sum and show that the caged polymatroid/matroid union commutes with multisymmetric lift; see \cite[page 4226]{EL24}.

	\section{Lorentzian and volume polynomials}\label{sec:Lorentzian and volume}
	%Even though the theory of Lorentzian and volume polynomials is the main motivation of this paper, we will not get into the details of the definition of Lorentzian and volume polynomials. Instead, 
	In this section we  review the key properties of Lorentzian and volume polynomials that are relevant to this paper. 
	Introduced in \cite{BH20}, Lorentzian polynomials are polynomials  satisfying ``Hodge-Riemann conditions in degree one". Polymatroids should be regarded as discrete analogues of Lorentzian polynomials. 
	
	\begin{theorem}\cite[Theorem 3.10]{BH20}\label{thm:Lorentzian polymatroid}
		The support of any Lorentzian polynomial is the set of bases of a polymatroid. Conversely, the set of bases of any polymatroid is the support of some Lorentzian polynomial. 
	\end{theorem}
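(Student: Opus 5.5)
Since the paper records this result as \cite[Theorem 3.10]{BH20}, my plan is to reconstruct the Brändén--Huh argument. I would start from their characterization of Lorentzian polynomials: a homogeneous polynomial $f\in\R_{\geq 0}[w_1,\dots,w_n]$ of degree $d$ is Lorentzian exactly when
\begin{itemize}
\item[(a)] its support is M-convex, and
\item[(b)] every derivative $\partial^\beta f$ with $|\beta|=d-2$ is a quadratic form whose Hessian has at most one positive eigenvalue.
\end{itemize}
If one instead takes the definition as the closure of the strictly Lorentzian polynomials, the equivalence with (a)+(b) is the content of \cite[Theorem 2.25]{BH20}, and I would invoke it. With this characterization, the first assertion (support is the set of bases of a polymatroid) is immediate from (a). I would then pass from M-convex sets to polymatroids using the Remark identifying M-convex subsets of $\Delta^d_n$ with basis sets of polymatroids.

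For the converse, let $J=\mathcal{B}_P\subseteq\Delta^d_n$ be the basis set of a polymatroid. I would take the normalized generating function
\[
f_J=\sum_{\alpha\in J}\frac{w^\alpha}{\alpha!},
\]
whose support is $J$, so (a) holds. For (b), the key computation is that differentiating the normalized generating function is clean:
\[
\partial^\beta f_J=\sum_{\alpha\in J,\ \alpha\geq\beta}\frac{w^{\alpha-\beta}}{(\alpha-\beta)!}=f_{J_\beta},\qquad J_\beta=\{\alpha-\beta\mid \alpha\in J,\ \alpha\geq\beta\}.
\]
Next I would check that $J_\beta$ is M-convex in $\Delta^2_n$ whenever it is nonempty; it is a "contraction" of $J$. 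Indeed, if $\alpha,\alpha'\geq\beta$ then the exchanged vectors $\alpha-e_j+e_i$ and $\alpha'-e_i+e_j$ are again $\geq\beta$. This holds because we only lower coordinate $j$ of $\alpha$ where $\alpha_j>\alpha'_j\geq\beta_j$, and symmetrically for $\alpha'$. This reduces everything to degree two.

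The main obstacle is the degree-two case: if $K\subseteq\Delta^2_n$ is M-convex, then the Hessian $A$ of $f_K$ has at most one positive eigenvalue. Here $A_{ij}=1$ if $e_i+e_j\in K$ and $A_{ij}=0$ otherwise, including the diagonal entries, since $\partial_i^2(w_i^2/2)=1$. I would first restrict to the set $S$ of indices occurring in $K$, which only adds zero rows and columns. On $S$, I would use the exchange axiom to show that the relation $i\sim j \iff e_i+e_j\notin K$ is an equivalence relation on $S$, where reflexivity also requires $i\sim i$ to be read off from the loops. Concretely, I would show that if $e_i+e_k\in K$ and $e_j+e_l\in K$, then exchanging forces $e_i+e_j\in K$ or the corresponding entries to line up. I expect a short case analysis to settle this, though care is needed with diagonal entries, which I would handle by allowing a class to be "full", with its diagonal entries equal to $1$. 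Granting this structure, $A$ is the all-ones matrix on $S$ minus a block-diagonal matrix whose blocks are all-ones matrices (or zero blocks for the full classes). Hence
\[
A=\mathbf{1}\mathbf{1}^T-D,\qquad D\succeq 0,
\]
and by Weyl's inequality $A$ has at most one positive eigenvalue. This would complete the converse.
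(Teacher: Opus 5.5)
The paper gives no proof of its own here, only the citation to \cite[Theorem 3.10]{BH20} (with \cite[Theorem 2.25]{BH20} for the support direction). Your reconstruction is essentially the Br\"and\'en--Huh argument and is correct: M-convexity of the support from their characterization, the normalized generating function $f_J$, stability of M-convexity under $J\mapsto J_\beta$, and the degree-two Hessian written as $\mathbf{1}\mathbf{1}^T-D$ with $D\succeq 0$.

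The degree-two structural step, which you leave as a sketch, closes with a single exchange. Suppose $e_i+e_k\in K$ (allowing $i=k$) and $j\in S\setminus\{i,k\}$. Exchanging at coordinate $j$ against any element of $K$ involving $j$ forces $e_i+e_j\in K$ or $e_k+e_j\in K$. For $i=k$ this shows that indices with $2e_i\in K$ have all-ones rows on $S$, and for $i\neq k$ it is exactly transitivity of $\sim$ on the remaining indices.
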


	A special class of Lorentzian polynomials are the \emph{realizable volume polynomials} over a fixed field $\mathbb{K}$. They are defined as the volume polynomial of a collection of base-point-free divisors on a projective variety. We refer to \cite{GHM+25} for more details about volume polynomials. Their combinatorial counterparts are the algebraic polymatroids over $\mathbb{K}$. The following result is analogous to  \autoref{thm:Lorentzian polymatroid}.
	
	\begin{proposition}\cite[Proposition 5.4]{GHM+25}\label{prop:volume polynomial polymatroid}
		Given a field $\mathbb{K}$, the support of any realizable volume polynomial over $\mathbb{K}$ is the set of bases of an algebraic polymatroid over $\mathbb{K}$. Conversely, the set of bases of any algebraic polymatroid over $\mathbb{K}$ is the support of some realizable volume polynomial over $\mathbb{K}$. 
	\end{proposition}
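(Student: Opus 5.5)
The statement here, \cite[Proposition 5.4]{GHM+25}, has two directions, and I would prove them separately. Both rest on the standard dictionary between volume polynomials and transcendence degrees of generic points. Let $X$ be an irreducible projective variety of dimension $d$ over an algebraically closed field containing $\mathbb{K}$, and let $D_1,\dots,D_n$ be base-point-free divisors on $X$. Each $D_i$ gives a morphism $\phi_i\colon X\to\mathbb{P}^{m_i}$. The volume polynomial is
\[
\mathrm{vol}(x_1,\dots,x_n)=\sum_{|\beta|=d}\frac{d!}{\beta!}\,(D_1^{\beta_1}\cdots D_n^{\beta_n})\,x^\beta .
\]
Its coefficients are nonnegative, so its support consists of those $\beta$ with $D^\beta>0$.

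\textbf{Step 1: support is contained in the bases.} I would invoke the classical criterion for mixed intersection numbers of nef divisors (Kleiman–Bertini / Kaveh–Khovanskii type): $D^\beta>0$ if and only if, for every $S\subseteq E$,
\[
\sum_{i\in S}\beta_i\le \dim \phi_S(X),\qquad \phi_S=\prod_{i\in S}\phi_i .
\]
I then set $\rk(S)=\dim\phi_S(X)$. This function is normalized and increasing. It is submodular because it equals the transcendence degree of the subfield of $\mathbb{K}(X)$ generated by the coordinate ratios of $\phi_i$ for $i\in S$. So $\rk$ is exactly an algebraic polymatroid over $\mathbb{K}$, presented by the subfields $K_i\subseteq\mathbb{K}(X)$. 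By the definition of bases given in this section, the criterion identifies the support with $\mathcal{B}_P$.

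\textbf{Step 2: every algebraic polymatroid arises.} Let $P$ be algebraic over $\mathbb{K}$, presented by finitely generated subextensions $K_1,\dots,K_n$ of some finitely generated field $L/\mathbb{K}$. I would proceed as follows.
\begin{itemize}
\item Replace $L$ by the compositum of the $K_i$, so that $L$ is the function field of a projective variety $X$.
\item Pick generators $f_{i,1},\dots,f_{i,m_i}$ of each $K_i$.
\item Let $X$ be the closure of the image of the rational map $(f_{i,j})\colon \mathrm{Spec}\to\prod_i\mathbb{P}^{m_i}$.
\item Resolve indeterminacy by passing to the closure of the graph, so that each projection becomes a morphism.
\end{itemize}
Pulling back hyperplane classes gives base-point-free $D_i$ with $\dim\phi_S(X)=\operatorname{trdeg}\bigl(\mathbb{K}(K_i:i\in S)\bigr)=\rk_P(S)$. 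Step 1 then shows that the support of $\mathrm{vol}$ equals $\mathcal{B}_P$.

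\textbf{Main obstacle.} The crux is the positivity criterion in Step 1. I would prove it by induction on $d$: intersect with a general member of $|D_i|$ for some $i$ with $\beta_i>0$, using Bertini for base-point-free systems. This drops both $\dim X$ and the relevant ranks by exactly one, provided Hall-type inequalities of the form $\rk(S)\ge\sum_{i\in S}\beta_i$ hold. The conditions $\rk(S)\ge\sum_{i\in S}\beta_i$ must be carefully tracked through the hyperplane section. This is especially delicate in positive characteristic, where one should use Bertini for irreducibility of the preimage rather than smoothness.
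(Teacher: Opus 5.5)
The paper gives no argument of its own here; the statement is quoted from \cite[Proposition 5.4]{GHM+25}. The positivity criterion you invoke is proved in \cite{multidegrees}, which the introduction cites. Your outline is the standard argument and is sound in structure: the criterion, $\dim\phi_S(X)$ as a transcendence degree (hence submodular and algebraic), and for the converse the closure of $\operatorname{Spec}\mathbb{K}[f_{i,j}]$ in $\prod_i\mathbb{P}^{m_i}$. On that closure the projections are already morphisms, so no resolution of indeterminacy is needed. Two points need repair.

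First, in Step 1 the bases of $P$ have size $\rk_P(E)=\dim\phi_E(X)$, while the support of the volume polynomial lies in degree $\dim X$. If $\dim\phi_E(X)<\dim X$, the volume polynomial is $0$ and its support is empty. That cannot be $\mathcal{B}_P$, since a polymatroid always has bases. So the statement must be read for nonzero volume polynomials. You should record that one $\beta$ in the support, with the criterion for $S=E$, gives $\dim X=|\beta|\le\dim\phi_E(X)$, hence equality. Only then does the criterion identify the support with $\mathcal{B}_P$. In Step 2 this is automatic, since $\dim X=\operatorname{trdeg}_{\mathbb{K}}L=\rk_P(E)$.

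Second, your plan for proving the criterion relies on Bertini irreducibility of a general member of $|D_i|$. That fails when $\dim\phi_i(X)=1$: the general member is then a union of several fibres of $\phi_i$. For example, take $\mathcal{O}(2,0)$ on $\mathbb{P}^1\times\mathbb{P}^1$, whose image is a conic. The induction survives if you work with the cycle $D_i\cap[X]=\sum_Y m_Y[Y]$ with $m_Y>0$. All $D_j$ are nef, so $D^\beta=\sum_Y m_Y\,(D|_Y)^{\beta-e_i}$ is a sum of nonnegative terms, and one good component suffices.

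For general $H$, every component $Y$ meets the dense open sets over which the fibres of the maps $\phi_T$ have generic dimension. This gives $\dim\phi_T(Y)=\rk(T)-1$ for $T\ni i$. If $\phi_i(X)$ is a curve, $\phi_i$ is constant on $Y$, so $\dim\phi_S(Y)=\dim\phi_{S\cup\{i\}}(Y)=\rk(S\cup\{i\})-1$ for $S\not\ni i$. When $\dim\phi_i(X)\ge 2$, your Bertini argument does apply.

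Also, for $S\not\ni i$ the ranks do not simply drop by one. The correct bound is $\dim\phi_S(Y)\ge\min\bigl(\rk(S),\rk(S\cup\{i\})-1\bigr)$. It dominates $\sum_{j\in S}\beta_j$ precisely because $\beta_i\ge 1$ and the inequality for $S\cup\{i\}$ holds.

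Alternatively, just cite \cite{multidegrees}, which proves the criterion over an arbitrary field. That also lets you keep $X$, its function field and all transcendence degrees over the single field $\mathbb{K}$. As written you mix two fields: $X$ lives over an algebraically closed field containing $\mathbb{K}$, but you then compute in $\mathbb{K}(X)$ with $\operatorname{trdeg}$ over $\mathbb{K}$.
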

	
	Taking the associated multisymmetric matroid of a polymatroid is analogous to the polarization functor of volume polynomials. Given $\alpha\in \Z_{\geq 0}^n$, we define the total polarization operator by
	\begin{align*}
		\Pi^\uparrow_{\leq \alpha}: \R[x_1, \dots, x_n]_{\leq \alpha}&\to \R[x_{i,j}; 1\leq i\leq n, 1\leq j\leq \alpha_i]_{\leq \mathbf{1}}\\
		x_1^{k_1}\cdots x_n^{k_n}&\mapsto \prod_{1\leq i\leq n}\frac{e_{k_i}(x_{i,1}, \dots, x_{i, \alpha_i})}{\binom{\alpha_i}{k_i}}
	\end{align*}
	where $e_{k_i}(x_{i,1}, \dots, x_{i, \alpha_i})$ denotes the $k_i$-th elementary symmetric polynomial in $(x_{i,1}, \dots, x_{i, \alpha_i})$. Notice that the total polarization operator is the composition of the single-variable polarization operator defined in \cite[Section 4]{GHM+25} applied to each variable $x_i$. 
	
	The product of symmetric groups $\prod_{i=1}^n \mathfrak S_{\alpha_i}$ acts on the space %of multiaffine polynomials 
	$\R[x_{i,j}; 1\leq i\leq n, 1\leq j\leq \alpha_i]_{\leq \mathbf{1}}$ by permuting the variables. By definition, $\Pi_{\leq \alpha}^{\uparrow}(f)$ is invariant under this action. In particular, the support of $\Pi_{\leq \alpha}^{\uparrow}(f)$ is a subset of 
	\[
	\{(i, j)\mid 1\leq i\leq n, 1\leq j\leq \alpha_i\}
	\]
	that is preserved by the natural $\prod_{i=1}^n \mathfrak S_{\alpha_i}$-action. 
	
	The following lemma follows immediately from the definitions of the total polarization operator and the multisymmetric lift. 
	
	\begin{lemma}\label{lemma polarization}
		For fixed $\alpha\in \Z_{\geq 0}^n$, let $f\in \R[x_1, \dots, x_n]_{\leq \alpha}$ be a homogeneous polynomial whose support is the bases of a polymatroid $P$, e.g., $f$ is any Lorentzian polynomial. Then, $\alpha$ is  a cage of $P$ and the support of $\Pi_{\leq \alpha}^{\uparrow}(f)$ is equal to the multisymmetric lift of $(P, \alpha)$. 
	\end{lemma}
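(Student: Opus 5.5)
We need to prove Lemma \ref{lemma polarization}: $\alpha$ is a cage of $P$, and the support of $\Pi^\uparrow_{\leq\alpha}(f)$ equals (the set of bases of) the multisymmetric lift $\widetilde P$ of $(P,\alpha)$.

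\emph{Cage.} Since $f\in\R[x_1,\dots,x_n]_{\leq\alpha}$, every monomial $x^k$ in the support satisfies $k_i\le\alpha_i$. For a polymatroid, $\rk_P(\{i\})=\max\{\beta_i\mid \beta\in\mathcal{B}_P\}$; this is the standard fact that singleton ranks are attained by bases, via the greedy/M-convex description. Since $\mathcal{B}_P=\Supp(f)$, we get $\rk_P(\{i\})\le\alpha_i$. So $\alpha$ is a cage.

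\emph{Support of the polarization.} Expand $\Pi^\uparrow_{\leq\alpha}(f)=\sum_{k}c_k\prod_i e_{k_i}(x_{i,1},\dots,x_{i,\alpha_i})/\binom{\alpha_i}{k_i}$. Distinct exponent vectors $k$ produce disjoint sets of squarefree monomials: a monomial $\prod x_{i,j}$ over a set $A\subseteq\widetilde E$ arises only from $k$ with $k_i=|A\cap E^i|$. All the elementary symmetric coefficients are positive, so no cancellation occurs. Hence
\[
\Supp\Pi^\uparrow_{\leq\alpha}(f)=\{A\subseteq\widetilde E\mid (|A\cap E^i|)_i\in\mathcal{B}_P\}.
\]

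\emph{Identification with $\widetilde P$.} It remains to show that the bases of $\widetilde P$ are exactly the sets $A$ whose profile $(|A\cap E^i|)_i$ is a basis of $P$. This is the step needing real work, though it is standard from \cite{CSW,CHL+22}.

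First, $\rk(\widetilde P)=\rk_P(E)$. Indeed, the minimum over $B$ of $\rk_P(B)+\sum_{i\notin B}\alpha_i$ is attained at $B=E$, because the cage inequality and submodularity give $\rk_P(B\cup\{i\})\le\rk_P(B)+\alpha_i$.

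Next, take $A$ with $|A|=\rk_P(E)$ and profile $\beta$. Then $A$ is independent in $\widetilde P$ iff $|A|\le\rk_{\widetilde P}(A)$, i.e. iff for all $B\subseteq E$,
\[
\sum_i\beta_i\le\rk_P(B)+\sum_{i\notin B}\beta_i,
\]
which rearranges to $\sum_{i\in B}\beta_i\le\rk_P(B)$. Together with $|\beta|=\rk_P(E)$, this is precisely the definition of $\beta\in\mathcal{B}_P$. Hence the bases of $\widetilde P$ are exactly the sets whose profile is a basis of $P$, which matches the support computed above and proves the lemma.
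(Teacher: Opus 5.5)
Your proposal is correct and is essentially the argument the paper has in mind. The paper gives no written proof, saying only that the lemma follows immediately from the definitions of $\Pi^\uparrow_{\leq \alpha}$ and the multisymmetric lift, and your argument carries out exactly that unpacking: distinct exponent vectors give disjoint sets of squarefree monomials, and the rank formula for $\widetilde P$ turns independence of $A$ into $\sum_{i\in B}|A\cap E^i|\leq \rk_P(B)$ for all $B\subseteq E$, with the one extra ingredient $\rk_P(\{i\})=\max_{\beta\in\mathcal{B}_P}\beta_i$ (the standard greedy fact) correctly invoked for the cage claim.
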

	
	\begin{proposition}\cite[Proposition 4.1]{GHM+25}\label{prop:polynomial polarization}
		For fixed $\alpha\in \Z_{\geq 0}^n$, let $f\in \R[x_1, \dots, x_n]_{\leq \alpha}$ be a homogeneous polynomial. Then, $f$ is Lorentzian if and only if $\Pi_{\leq \alpha}^{\uparrow}(f)$ is Lorentzian. Moreover, given a field $\mathbb{K}$, $f$ is a realizable volume polynomial over  $\mathbb{K}$ if and only if $\Pi_{\leq \alpha}^{\uparrow}(f)$ is a realizable volume polynomial  over $\mathbb{K}$. 
	\end{proposition}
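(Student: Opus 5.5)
The plan is to reduce to a single variable and then treat the two directions separately. By definition, $\Pi^{\uparrow}_{\leq\alpha}$ is the composition of the single-variable polarization operators, one for each $x_i$. Each of these only touches one variable and leaves the others as parameters. So it suffices to prove both equivalences for one polarization step: replace $x_1$, of cage $a=\alpha_1$, by $x_{1,1},\dots,x_{1,a}$, with $x_1^k\mapsto e_k(x_{1,1},\dots,x_{1,a})/\binom{a}{k}$. Iterating then gives the statement for $\Pi^{\uparrow}_{\leq\alpha}$.

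\emph{The direction ``$\Pi^{\uparrow}_{\leq\alpha}(f)$ good $\Rightarrow$ $f$ good''.} This direction rests on the diagonal identity. Since $e_k(x_1,\dots,x_1)=\binom{a}{k}x_1^k$, substituting $x_{1,j}\mapsto x_1$ for all $j$ recovers $f$ from its polarization. This substitution is a linear change of variables given by a nonnegative matrix. Lorentzian polynomials are preserved under such substitutions (a standard closure property from \cite{BH20}), so $f$ is Lorentzian whenever its polarization is.

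For volume polynomials the same substitution has a geometric meaning. Suppose the polarization is the volume polynomial of base-point-free divisors $D_{1,1},\dots,D_{1,a},D_2,\dots$ on a projective variety $Y$. Then the substitution computes the volume polynomial of $D_1':=\sum_j D_{1,j}$ together with $D_2,\dots$ on the same $Y$. Since a sum of base-point-free divisors is base-point-free, $f$ is again a realizable volume polynomial over $\mathbb{K}$.

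\emph{The direction ``$f$ good $\Rightarrow$ $\Pi^{\uparrow}_{\leq\alpha}(f)$ good''.} In the Lorentzian case I would invoke the polarization theorem of Br\"and\'en--Huh. It states that the polarization of a Lorentzian polynomial of degree at most $a$ in $x_1$ is Lorentzian. Its proof checks the $M$-convexity of the support and then applies the reduction to quadratic forms after taking derivatives. Derivatives in the $x_{1,j}$ of the polarization are polarizations of derivatives of $f$, so an induction on degree closes that argument.

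The realizable volume case is the main obstacle. We are given base-point-free $D_1,\dots,D_n$ on $X$ with $f=\int_X(\sum x_iD_i)^d$. We must produce a variety $Y$ and base-point-free divisors $D_{1,1},\dots,D_{1,a}$ with two properties. First, each $D_{1,j}$ appears with exponent at most one. Second, every product involving $k$ distinct $D_{1,j}$'s equals $\binom{a}{k}^{-1}$ times the corresponding intersection number with $D_1^k$.

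I would first try the incidence-variety construction. Let $\varphi\colon X\to\mathbb{P}^m$ be the morphism defined by $|D_1|$, and let
\[
Y=\{(x,h_1,\dots,h_a)\in X\times(\mathbb{P}^{m\vee})^a \mid \varphi(x)\in h_j \text{ for all } j\}.
\]
Take $D_{1,j}$ to be pulled back from suitable factors, and pull back $D_2,\dots,D_n$ from $X$. This will probably need an extra twist so that the $x_1$-terms decouple into a multiaffine polynomial. Then compute the intersection numbers on $Y$ by projecting to $X$ and using that general hyperplanes through a point impose independent conditions. This reproduces the elementary symmetric structure, possibly up to a positive rescaling of the variables, which is harmless for realizability. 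If this computation becomes unwieldy, I would fall back on the explicit construction used in the proof of \cite[Proposition 4.1]{GHM+25}.
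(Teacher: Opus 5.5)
Your reduction to a single polarization step is sound, and so is the appeal to Br\"and\'en--Huh's theorem that polarization preserves Lorentzian polynomials. The diagonal substitution $x_{1,j}\mapsto x_1$ also correctly handles both reverse implications: it is a nonnegative linear change of variables, and geometrically it replaces $D_{1,1},\dots,D_{1,a}$ by their base-point-free sum. The paper itself gives no argument here; it simply cites \cite[Proposition 4.1]{GHM+25}.

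The gap is the one genuinely geometric implication: that the polarization $\Pi(f)$ of a realizable volume polynomial is realizable. Falling back on ``the construction in \cite{GHM+25}'' just cites the statement, and your incidence construction does not work.
\begin{itemize}
\item As written, $\dim Y=d+a(m-1)$. So for $m\geq 2$ its volume polynomial has the wrong degree, and $Y$ must be cut down in a way you do not specify.
\item The natural cut restricts each $h_j$ to a general pencil. Then $Y$ becomes a blow-up of $X$, and $H_j$ becomes $D_1$ minus an exceptional divisor.
\item This gives $H_S\cdot D'^{\gamma'}=D_1^{|S|}\cdot D'^{\gamma'}$ for every set $S$ of new indices and every monomial $D'^{\gamma'}$ in $D_2,\dots,D_n$. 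So it realizes $x_1^k\mapsto k!\,e_k(y)$, the multiaffine part of $f(y_1+\cdots+y_a,x_2,\dots,x_n)$.
\item An exact realization of $\Pi(f)$ instead needs $E_S\cdot D'^{\gamma'}=\frac{(a-k)!}{a!}\,D_1^k\cdot D'^{\gamma'}$ for $|S|=k$. Your target $\binom{a}{k}^{-1}$ drops the $1/k!$ coming from the multinomial coefficient.
\item The weight $(a-k)!/a!$ is not of the form $Cc^k$, so no positive rescaling repairs it. For instance, take $X=\mathbb{P}^2$, $D_1=D_2=H$, $a=2$. The pencil construction gives $2y_1y_2+2(y_1+y_2)x_2+x_2^2$, whereas $\Pi(f)=y_1y_2+(y_1+y_2)x_2+x_2^2$. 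No positive rescaling of $y_1,y_2,x_2$ together with an overall constant relates the two.
\end{itemize}
Counting conditions imposed by hyperplanes through a point only ever produces $D_1^{|S|}$ with weight one.

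The missing idea is to obtain these weights from a symmetric cover of degree $a!$.
\begin{itemize}
\item When $f\neq 0$, the sum $D_1+\cdots+D_n$ is big, so the cage condition forces the image of $X$ under $|D_1|$ to have dimension at most $a$. A general linear projection then yields a morphism $\psi\colon X\to\mathbb{P}^a$ with $\psi^*\mathcal{O}(1)=D_1$.
\item Identify $\mathbb{P}^a\cong\operatorname{Sym}^a\mathbb{P}^1$ and let $\sigma\colon(\mathbb{P}^1)^a\to\mathbb{P}^a$ be the quotient map. It is finite flat of degree $a!$, with $\sigma^*\mathcal{O}(1)=h_1+\cdots+h_a$, where $h_j$ is the point class pulled back from the $j$-th factor.
\item Set $Y=X\times_{\mathbb{P}^a}(\mathbb{P}^1)^a$, with projection $\pi\colon Y\to X$, and let $E_j$ be the pullback of $h_j$. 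Then $E_j^2=0$ and $E_1+\cdots+E_a=\pi^*D_1$.
\item Expand $(\pi^*D_1)^k=k!\sum_{|S|=k}E_S$. The $\mathfrak{S}_a$-symmetry and the projection formula then give $E_S\cdot\pi^*D'^{\gamma'}=(a-k)!\,D_1^k\cdot D'^{\gamma'}$ for $|S|=k$.
\item Hence the volume polynomial of $(Y;E_1,\dots,E_a,\pi^*D_2,\dots,\pi^*D_n)$ is exactly $a!\,\Pi(f)$.
\end{itemize}
Some positive constant is unavoidable anyway: $x^2$ from $\mathbb{P}^2$ polarizes to $y_1y_2$, which would force $E_1\cdot E_2=1/2$. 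What remains is a genericity argument ensuring that $Y$ can be taken irreducible.
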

	
	By \autoref{lemma polarization},  \autoref{prop:volume polynomial polymatroid}, and \autoref{prop:polynomial polarization}, we have the following corollary. 
	
	\begin{corollary}\label{cor:algebraic polymatroid lift}
		Given a polymatroid $P$ with ground set $E$ and a field $\mathbb{K}$, the following statements are equivalent.
		\begin{enumerate}
			\item The polymatroid $P$ is algebraic over $\mathbb{K}$.
			\item For any cage $\alpha$ of $P$, the multisymmetric lift of $(P, \alpha)$ is an algebraic matroid over $\mathbb{K}$.
			\item There exists a cage $\alpha$ of $P$ such that the multisymmetric lift of $(P, \alpha)$ is an algebraic matroid over $\mathbb{K}$.
		\end{enumerate}
	\end{corollary}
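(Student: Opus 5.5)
The corollary chains \autoref{prop:volume polynomial polymatroid}, \autoref{lemma polarization}, and \autoref{prop:polynomial polarization}, passing from polymatroids to realizable volume polynomials, polarizing, and returning to matroids via supports. Since (2) $\Rightarrow$ (3) is trivial (every polymatroid has a cage, e.g. $\alpha_i=\rk(i)$), it suffices to prove (1) $\Rightarrow$ (2) and (3) $\Rightarrow$ (1).

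For (1) $\Rightarrow$ (2), suppose $P$ is algebraic over $\mathbb{K}$ and let $\alpha$ be any cage. By \autoref{prop:volume polynomial polymatroid} there is a realizable volume polynomial $f$ over $\mathbb{K}$ whose support is $\mathcal{B}_P$. Because $\alpha$ is a cage, every basis $\beta\in\mathcal{B}_P$ satisfies $\beta_i=|\pi_{\{i\}}(\beta)|\le \rk(i)\le\alpha_i$, so $f\in\R[x_1,\dots,x_n]_{\le\alpha}$ and $f$ is homogeneous. By \autoref{prop:polynomial polarization}, $\Pi^\uparrow_{\le\alpha}(f)$ is a realizable volume polynomial over $\mathbb{K}$, and by \autoref{lemma polarization} its support is the set of bases of the multisymmetric lift $\widetilde P$ of $(P,\alpha)$. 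Applying \autoref{prop:volume polynomial polymatroid} again, this support is the set of bases of an algebraic polymatroid. A polymatroid whose bases are $0/1$ vectors is a matroid, namely $\widetilde P$, so $\widetilde P$ is algebraic over $\mathbb{K}$.

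For (3) $\Rightarrow$ (1), suppose $\widetilde P$ is algebraic over $\mathbb{K}$ for some cage $\alpha$. By \autoref{prop:volume polynomial polymatroid} there is a realizable volume polynomial $g$ in the variables $x_{i,j}$ with support $\mathcal{B}_{\widetilde P}$. However, \autoref{prop:polynomial polarization} only goes backwards for polynomials already of the form $\Pi^\uparrow_{\le\alpha}(f)$, and $g$ need not be of that form, so the step that needs care is producing such an $f$. I would set $f(x_1,\dots,x_n)=g(x_1,\dots,x_1,\dots,x_n,\dots,x_n)$, i.e. substitute $x_{i,j}\mapsto x_i$. This specialization is a diagonal/pullback operation, which preserves realizable volume polynomials: geometrically, setting the divisors $D_{i,j}$ equal to $D_i$ is legitimate, since the volume polynomial in fewer divisors is the restriction. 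All coefficients of $g$ are nonnegative, so there is no cancellation, and the support of $f$ is the image of $\mathcal{B}_{\widetilde P}$ under $S\mapsto(|S\cap E^i|)_i$. From the rank formula in \autoref{lift}, this image is exactly $\mathcal{B}_P$; this is the standard compatibility of the multisymmetric lift with bases. Hence $\mathcal{B}_P$ is the support of a realizable volume polynomial, and \autoref{prop:volume polynomial polymatroid} shows $P$ is algebraic.

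If one prefers to stay strictly within the cited results, an alternative for this step is to symmetrize $g$ over $\prod_i\mathfrak S_{\alpha_i}$ so that it lies in the image of $\Pi^\uparrow_{\le\alpha}$, and then apply \autoref{prop:polynomial polarization} directly. Either way, the work lies in justifying that the specialization (or symmetrization) stays within realizable volume polynomials over $\mathbb{K}$ and in identifying its support with $\mathcal{B}_P$.
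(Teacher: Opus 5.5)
Your proof is correct. For (1)$\Rightarrow$(2) it is exactly the chain the paper intends; the paper's entire proof is the citation of \autoref{lemma polarization}, \autoref{prop:volume polynomial polymatroid} and \autoref{prop:polynomial polarization}.

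You take a genuinely different route for (3)$\Rightarrow$(1). The paper's one-line citation implicitly relies on the reverse direction of \autoref{prop:polynomial polarization}. As you correctly point out, that direction only applies to a realizable volume polynomial already of the form $\Pi^\uparrow_{\le\alpha}(f)$, and \autoref{prop:volume polynomial polymatroid} applied to $\widetilde P$ does not supply one. Your diagonal specialization $x_{i,j}\mapsto x_i$ closes this gap:
\begin{enumerate}
\item it yields the volume polynomial of the base-point-free divisors $\sum_j D_{i,j}$ on the same variety;
\item nonnegativity of the coefficients makes its support the image of $\mathcal{B}_{\widetilde P}$ under $S\mapsto(|S\cap E^i|)_i$;
\item that image is $\mathcal{B}_P$.
\end{enumerate}
The last point holds because the rank formula in \autoref{lift} gives $\rk_{\widetilde P}(S)=\min_{B\subseteq E}\bigl(\rk_P(B)+\sum_{i\notin B}|S\cap E^i|\bigr)$. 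Hence $S$ is a basis of $\widetilde P$ exactly when $(|S\cap E^i|)_i\in\mathcal{B}_P$, and every basis of $P$ arises this way since it lies below $\alpha$.

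The cost is one fact outside the cited results: realizable volume polynomials are closed under this specialization. That fact is elementary, and in exchange you no longer need the backward direction of \autoref{prop:polynomial polarization} at all, so the passage from the lift back to $P$ becomes airtight.

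Your symmetrization alternative is not independent of this. The $\prod_i\mathfrak S_{\alpha_i}$-average of $g$ is precisely $\Pi^\uparrow_{\le\alpha}$ applied to the specialization of $g$. So that route also rests on the specialization fact, together with the forward direction of \autoref{prop:polynomial polarization}.
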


	As a consequence, in the following proposition we obtain that the intersection of caged algebraic polymatroids  is algebraic.
	
	\begin{proposition}\label{prop:polymatroid intersection preserves algebraic}
		%	Given a field $\mathbb{K}$, the intersection of algebraic polymatroids over $\mathbb{K}$ is algebraic over $\mathbb{K}$. More precisely, 
		Let $P$ and $Q$ be algebraic polymatroids over $\mathbb{K}$ and let $\alpha$ be a common cage of both $P$ and $Q$. Then, $P\wedge_\alpha Q$ is algebraic over $\mathbb{K}$. 
	\end{proposition}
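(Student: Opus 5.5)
The plan is to reduce the statement to the matroid case via multisymmetric lifts, using \autoref{cor:algebraic polymatroid lift} to pass between polymatroids and lifts, and \autoref{prop:multisymmetric lift and intersection} to commute the lift with intersection.

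First, since $P$ and $Q$ are algebraic over $\mathbb{K}$ and $\alpha$ is a cage of both, the implication (1)$\Rightarrow$(2) of \autoref{cor:algebraic polymatroid lift} shows that the multisymmetric lifts $\widetilde P$ and $\widetilde Q$ of $(P,\alpha)$ and $(Q,\alpha)$ are algebraic matroids over $\mathbb{K}$. Both live on the same ground set $\widetilde E=E^1\sqcup\cdots\sqcup E^n$ with $|E^i|=\alpha_i$. By \cite[Theorem 5.11]{GHM+25}, which was already used in the proof of \autoref{prop:algebraic and representable}, their intersection $\widetilde P\wedge\widetilde Q$ is then an algebraic matroid over $\mathbb{K}$.

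Next, \autoref{prop:multisymmetric lift and intersection} gives $\widetilde{P\wedge_\alpha Q}=\widetilde P\wedge\widetilde Q$. Here the left side is the multisymmetric lift of the caged polymatroid $(P\wedge_\alpha Q,\alpha)$, where $\alpha$ is the default cage from \autoref{thmdfn:polymatroid intersection}. So the lift of $P\wedge_\alpha Q$ with respect to the cage $\alpha$ is algebraic over $\mathbb{K}$. The implication (3)$\Rightarrow$(1) of \autoref{cor:algebraic polymatroid lift} then yields that $P\wedge_\alpha Q$ is algebraic over $\mathbb{K}$.

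The only point needing care is checking that $\alpha$ really is a cage of $P\wedge_\alpha Q$, so that \autoref{cor:algebraic polymatroid lift} applies with this $\alpha$. This follows from the definition: every basis has the form $\beta_P\cap_\alpha\beta_Q$, and each such multiset is coordinatewise at most $\alpha$. Hence $\rk(i)=\max_\beta \beta_i\le\alpha_i$, and the argument is complete.
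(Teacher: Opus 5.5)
Your proposal is correct and follows the paper's proof step for step. You lift via \autoref{cor:algebraic polymatroid lift}, apply \cite[Theorem 5.11]{GHM+25} to $\widetilde P\wedge\widetilde Q$, commute lift and intersection via \autoref{prop:multisymmetric lift and intersection}, and descend again via the corollary. Your extra check that $\alpha$ is a cage of $P\wedge_\alpha Q$ is valid and makes explicit what the paper leaves implicit when it calls $\alpha$ the default cage.
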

	\begin{proof}
		Since $P$ and $Q$ are both algebraic over $\mathbb{K}$, by \autoref{cor:algebraic polymatroid lift}, their multisymmetric lifts $\widetilde{P}$ and $\widetilde{Q}$ are both algebraic matroids over $\mathbb{K}$. By \cite[Theorem 5.11]{GHM+25}, the matroid intersection $\widetilde{P}\wedge \widetilde{Q}$ is algebraic over $\mathbb{K}$. By  \autoref{prop:multisymmetric lift and intersection}, the multisymmetric lift $\widetilde{P \wedge_\alpha Q}$ is algebraic over $\mathbb{K}$. Finally, by  \autoref{cor:algebraic polymatroid lift}, the polymatroid intersection $P \wedge_\alpha Q$ is algebraic over $\mathbb{K}$. 
	\end{proof}

	\section{Polymatroid correspondence}\label{sec:polynmatroid correspondences}
	In this section, we generalize matroid correspondences to caged polymatroids.
	\begin{definition}\label{defn_poly correspondence}
		Fix $m,n\in \Z_{>0}$, and let $E_1=\{1, \dots, m\}$ and $E_2=\{m+1, \dots, n+m\}$. Given $\alpha\in \Z_{\geq 0}^{m}$ and $\beta\in \Z_{\geq 0}^{n}$, let $(\mathscr{C}, (\alpha, \beta))$ be a caged polymatroid on $E_1\cup E_2$. The \emph{polymatroid correspondence} induced by $\mathscr{C}$ is defined as
		\begin{align*}
			\mathscr{C}_*: \{\text{polymatroids with cage $\alpha$}\}&\to \{\text{polymatroids with cage $\beta$}\}\\
			P&\mapsto \big((P\oplus \mathbf{B}_\beta)\wedge_{(\alpha,\beta)} \mathscr{C}\big)\setminus E_1
		\end{align*}
		where $\mathbf{B}_\beta$ is the Boolean polymatroid with cage $\beta$ (see \autoref{ex:boolean_poly}). If we regard the set of polymatroids with a fixed cage as a poset category whose morphisms are polymatroid quotients, then $\mathscr C_*$ is a functor.
	\end{definition}
	
	To show that polymatroid correspondences preserve polymatroid quotients, we note that
	\begin{align*}
		P\twoheadrightarrow Q\text{ is a polymatroid quotient}&\iff \widetilde{P}\twoheadrightarrow \widetilde{Q} \text{ is a matroid quotient} \\
		&\implies  \widetilde{\mathscr{C}}_*(\widetilde{P})\twoheadrightarrow \widetilde{\mathscr{C}}_*(\widetilde{Q}) \text{ is a matroid quotient}\\
		&\iff  \widetilde{\mathscr{C}_*(P)}\twoheadrightarrow \widetilde{\mathscr{C}_*(Q)} \text{ is a matroid quotient}\\
		&\iff {\mathscr{C}_*(P)}\twoheadrightarrow {\mathscr{C}_*(Q)} \text{ is a matroid quotient},
	\end{align*}
	where the second equivalence follows from the next proposition,  a polymatroid analog of   \autoref{prop:polynomial polarization}.
	
	%We will not pursue the functoriality of the polymatroid correspondence map in this paper. Nevertheless, we will prove that polymatroid correspondence is compatible with multisymmetric lift and the polymatroid analog of Proposition \ref{prop:polynomial polarization}.
	
	\begin{proposition}\label{prop:commute with lift}
		Polymatroid correspondence is consistent with multisymmetric lift. More precisely, if $\alpha, \beta$, $\mathscr{C}$, and $P$ are as in  \autoref{defn_poly correspondence}, then 
		\[
		\widetilde{\mathscr{C}_*(P)}\cong \widetilde{\mathscr{C}}_*(\widetilde{P}). 
		\]
	\end{proposition}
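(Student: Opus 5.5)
The plan is to check that each of the three operations in the definition of $\mathscr{C}_*$ commutes with the multisymmetric lift. Then we chain these compatibilities together. Throughout, the ground set of $\widetilde{\mathscr{C}}$ is $\widetilde{E_1}\sqcup\widetilde{E_2}$, where $\widetilde{E_1}=E^1\sqcup\cdots\sqcup E^m$ with $|E^i|=\alpha_i$, and $\widetilde{E_2}$ is built analogously from $\beta$. Then $\widetilde{\mathscr{C}}_*$ is the matroid correspondence $\Mat(\widetilde{E_1})\to\Mat(\widetilde{E_2})$ of \autoref{def:mat_corr}. The isomorphism $\cong$ in the statement refers only to fixing these identifications of the copies $E^i$.

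The three compatibilities are as follows.

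\textbf{Direct sum.} For caged polymatroids we have $\widetilde{P\oplus Q}=\widetilde P\oplus\widetilde Q$ with cage $(\alpha,\beta)$. This follows from the rank formula in \autoref{lift}, since the minimum over $B=B_1\sqcup B_2$ splits into independent minima over $B_1\subseteq E_1$ and $B_2\subseteq E_2$. It is also immediate from the bijection in \autoref{lift}: $\widetilde P\oplus\widetilde Q$ is multisymmetric, and its associated polymatroid is $P\oplus Q$.

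\textbf{The Boolean polymatroid.} By \autoref{ex:boolean_poly}, $\widetilde{\mathbf{B}_\beta}=\mathbf{B}_{\widetilde{E_2}}$.

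\textbf{Intersection.} \autoref{prop:multisymmetric lift and intersection} gives $\widetilde{X\wedge_{(\alpha,\beta)}\mathscr{C}}=\widetilde X\wedge\widetilde{\mathscr{C}}$.

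\textbf{Deletion.} For a caged polymatroid $(R,(\alpha,\beta))$ we need $\widetilde{R\setminus E_1}=\widetilde R\setminus\widetilde{E_1}$, where the left side carries cage $\beta$. I would verify this directly on rank functions. Take $A\subseteq\widetilde{E_2}$. In the formula $\min_{B\subseteq E_1\sqcup E_2}\{\rk_R(B)+|A\setminus\bigcup_{i\in B}E^i|\}$, replacing $B$ by $B\cap E_2$ does not increase $\rk_R(B)$, because $\rk_R$ is increasing. It also leaves $|A\setminus\bigcup_{i\in B}E^i|$ unchanged, because $A$ is disjoint from $\widetilde{E_1}$. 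So the minimum may be taken over $B\subseteq E_2$, and this is exactly $\rk_{\widetilde{R\setminus E_1}}(A)$.

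Combining the three compatibilities gives
\[
\widetilde{\mathscr{C}_*(P)}=\widetilde{\big((P\oplus\mathbf{B}_\beta)\wedge_{(\alpha,\beta)}\mathscr{C}\big)}\setminus\widetilde{E_1}=\big((\widetilde P\oplus\mathbf{B}_{\widetilde{E_2}})\wedge\widetilde{\mathscr{C}}\big)\setminus\widetilde{E_1}=\widetilde{\mathscr{C}}_*(\widetilde P).
\]

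The main point requiring care is the intersection step. \autoref{prop:multisymmetric lift and intersection} is quoted for polymatroids with the same cage, and here it is applied on $E_1\sqcup E_2$ with the common cage $(\alpha,\beta)$. So I must check that $P\oplus\mathbf{B}_\beta$ indeed has cage $(\alpha,\beta)$, which is clear, and that $\mathscr{C}$ is caged by $(\alpha,\beta)$, which is the hypothesis. The remaining steps are routine manipulations of the rank formula.
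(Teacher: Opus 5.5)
Your proposal is correct and is essentially the paper's proof. The paper likewise notes that the multisymmetric lift commutes with direct sum with $\mathbf{B}_\beta$ and with deletion, invokes \autoref{prop:multisymmetric lift and intersection} for the intersection step with the common cage $(\alpha,\beta)$, and chains these equalities exactly as you do; the difference is that you verify the direct-sum and deletion compatibilities from the rank formula in \autoref{lift} (your $B\mapsto B\cap E_2$ argument is sound), whereas the paper only asserts them.
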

	\begin{proof}
		Note that %It is straightforward to check that
		multisymmetric lift commutes with taking direct sum with $\mathbf{B}_\beta$ and with deletions. Moreover, by  \autoref{prop:multisymmetric lift and intersection}, multisymmetric lift also commutes with polymatroid intersection. Therefore, we have
		\begin{align*}
			\widetilde{\mathscr{C}_*(P)}&=\widetildespan{\big((P\oplus \mathbf{B}_\beta)\wedge_{(\alpha,\beta)} \mathscr{C}\big)\setminus E_1}\\
			&= \widetildespan{\big((P\oplus \mathbf{B}_\beta)\wedge_{(\alpha,\beta)} \mathscr{C}\big)}\setminus (E^1\cup\cdots \cup E^m)\\
			&= \big(\widetilde{P\oplus \mathbf{B}_\beta} \wedge \widetilde{\mathscr{C}}\big)\setminus (E^1\cup\cdots \cup E^m)\\
			&= \Big(\big(\widetilde{P}\oplus \widetilde{\mathbf{B}_\beta}\big) \wedge \widetilde{\mathscr{C}}\Big)\setminus (E^1\cup\cdots \cup E^m)\\
			&=\widetilde{\mathscr C}_*(\widetilde{P}),
		\end{align*}
		where the intersections in the third and fourth lines are intersections of matroids with ground set $\widetilde{E_1}\cup \widetilde{E_2}=E^1\cup \cdots \cup E^{m+n}$. 
	\end{proof}

	\begin{proposition}\label{prop:algebraic polymatroid correspondence}
		If $\mathscr{C}$ is an algebraic polymatroid over a field $\mathbb{K}$, then $\mathscr{C}_*$ maps algebraic polymatroids over $\mathbb{K}$ to algebraic polymatroids over $\mathbb{K}$. 
	\end{proposition}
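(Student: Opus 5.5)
The plan is to reduce to the matroid case via multisymmetric lifts, exactly parallel to the proof of \autoref{prop:polymatroid intersection preserves algebraic}. Let $P$ be an algebraic polymatroid over $\mathbb{K}$ with cage $\alpha$, and let $(\mathscr{C},(\alpha,\beta))$ be algebraic over $\mathbb{K}$. By \autoref{cor:algebraic polymatroid lift} (direction (1)$\Rightarrow$(2)), applied to the cages $\alpha$ and $(\alpha,\beta)$, the multisymmetric lifts $\widetilde{P}$ on $E^1\sqcup\cdots\sqcup E^m$ and $\widetilde{\mathscr{C}}$ on $E^1\sqcup\cdots\sqcup E^{m+n}$ are algebraic matroids over $\mathbb{K}$.

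Next I would view $\widetilde{\mathscr{C}}$ as a matroid on $\widetilde{E_1}\sqcup\widetilde{E_2}$, where $\widetilde{E_1}=E^1\cup\cdots\cup E^m$ and $\widetilde{E_2}=E^{m+1}\cup\cdots\cup E^{m+n}$. It then defines a matroid correspondence $\widetilde{\mathscr{C}}_*:\Mat(\widetilde{E_1})\to\Mat(\widetilde{E_2})$ in the sense of \autoref{def:mat_corr}. Here I would note that the lift of the Boolean polymatroid $\mathbf{B}_\beta$ is the Boolean matroid on $\widetilde{E_2}$ (\autoref{ex:boolean_poly}), so the two definitions match. By \autoref{prop:algebraic and representable} (1), $\widetilde{\mathscr{C}}_*(\widetilde{P})$ is algebraic over $\mathbb{K}$. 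By \autoref{prop:commute with lift}, $\widetilde{\mathscr{C}_*(P)}\cong\widetilde{\mathscr{C}}_*(\widetilde{P})$, so the multisymmetric lift of $(\mathscr{C}_*(P),\beta)$ is algebraic over $\mathbb{K}$. Finally, \autoref{cor:algebraic polymatroid lift} (direction (3)$\Rightarrow$(1)), with the cage $\beta$, shows that $\mathscr{C}_*(P)$ is algebraic over $\mathbb{K}$.

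The only point needing care is bookkeeping. The isomorphism in \autoref{prop:commute with lift} must be compatible with the block structure $E^{m+1}\sqcup\cdots\sqcup E^{m+n}$, so that the right-hand side really is the lift with respect to the cage $\beta$. Algebraicity is invariant under relabeling the ground set, so an isomorphism suffices. One should also check that $\beta$ is indeed a cage of $\mathscr{C}_*(P)$; this holds because polymatroid intersection keeps the cage $(\alpha,\beta)$ and deletion restricts it. I expect no genuine obstacle beyond these checks.

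Alternatively, one could argue directly, without lifts. Direct sum with the Boolean polymatroid and deletion preserve algebraicity, and \autoref{prop:polymatroid intersection preserves algebraic} handles the intersection. This would need the (easy) facts that $\mathbf{B}_\beta$ and $P\oplus\mathbf{B}_\beta$ are algebraic, and that deletion preserves algebraic polymatroids. Either route works; I would present the second as the main argument since it is shorter, and mention the first as consistent.
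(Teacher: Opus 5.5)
Your main argument (the second route) is exactly the paper's proof. Direct sum with $\mathbf{B}_\beta$ and deletion preserve algebraicity, and \autoref{prop:polymatroid intersection preserves algebraic} handles the intersection with $\mathscr{C}$ over the common cage $(\alpha,\beta)$. Your lift-based alternative via \autoref{cor:algebraic polymatroid lift}, \autoref{prop:algebraic and representable}(1), and \autoref{prop:commute with lift} is also valid and non-circular, just longer.
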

	\begin{proof}
		%	By definition, the polymatroid correspondence is the composition of taking direct sum with $\mathbf{B}_\beta$, intersecting with $\mathscr{C}$, and deleting $E_1$. 
		We observe that taking direct sum with $\mathbf{B}_\beta$ and deletion preserve algebraic polymatroids. If $\mathscr{C}$ is algebraic over $\mathbb{K}$, then, by \autoref{prop:polymatroid intersection preserves algebraic}, taking intersection with $\mathscr{C}$ preserves algebraic polymatroids over $\mathbb{K}$. Therefore, if $\mathscr{C}$ is an algebraic polymatroid over $\mathbb{K}$, then $\mathscr{C}_*$ preserves algebraic polymatroids over $\mathbb{K}$. 
	\end{proof}
	
	We omit the proof of the following theorem as it is similar to that of its analog \autoref{thm:matroid example}. 
	
	\begin{theorem}
		Let $E=\{1,\ldots, n\}$ be a finite set and $(P,\alpha)$ a caged polymatroid on $E$. 
		The following functors on polymatroids  can be realized as correspondences via the given polymatroid. 
		We denote by $\mathbf{0}$ the vector $(0,\ldots, 0)\in \N^n$. 
		\begin{enumerate}
			\item \textbf{Identity:} 
			define the polymatroid $\mathscr{C}_{\mathrm{id}}$ on $E \sqcup E^*$ with cage $(\alpha, \alpha)$ and bases 
			\begin{align*}
				\{(\gamma, \alpha-\gamma) \mid \mathbf{0} \le \gamma \le \alpha\}.
			\end{align*}
			Then $\mathscr{C}_{\mathrm{id}*}(P)=P^*$.\smallskip

			\item \textbf{Permutation of the ground set:} for any permutation $\sigma$ of $E$ define the polymatroid $\mathscr{C}_{\mathrm{\sigma}}$ on $E \sqcup E^*$ with 
			with cage $(\alpha, \sigma(\alpha))$ and bases 
			\begin{align*}
				\{\left(\gamma, \sigma(\alpha) - \sigma(\gamma)\right) \mid \mathbf{0} \le \gamma \le \alpha\}.
			\end{align*}
			Then $\mathscr{C}_{\mathrm{\sigma}*}(P)=\mathrm{\sigma}(P)^*$, where  $\mathrm{\sigma}(P)$ is the polymatroid on $E$ whose bases are $\{\mathrm{\sigma}(B)\mid B\in \mathcal{B}_P\}$.\smallskip
			
			\item \textbf{Deletion:} 
			for any $e \in E$,  define the polymatroid $\mathscr{C}_{\setminus e}:= \mathscr{C}_{\mathrm{id}}\setminus e^*$ on $E \sqcup (E\setminus \{e\})^*$ with cage $(\alpha, \alpha_1, \ldots, \alpha_{i-1}, 0, \alpha_{i+1}, \ldots, \alpha_n)$ with bases 
			\begin{align*}
				\{(\gamma, \alpha-\gamma) \mid \mathbf{0} \le \gamma \le \alpha, \gamma_i = \alpha_i\}.
			\end{align*}
			Then $\mathscr{C}_{\setminus e *}(P)=(P\setminus e)^*$.\smallskip
			
			\item \textbf{Contraction}: 
			for any $e \in E$, define the polymatroid $\mathscr{C}_{/e} \coloneqq \mathscr{C}_{id}/e^*$ on $E \sqcup (E\setminus \{e\})^*$with cage $(\alpha, \alpha_1, \ldots, \alpha_{i-1}, 0 ,\alpha_{i+1}, \ldots, \alpha_N)$ and bases 
			\begin{align*}
				\{(\gamma, \alpha-\gamma) \mid \mathbf{0} \le \gamma \le \alpha, \gamma_i = \alpha_i\}.
			\end{align*}
			Then $\mathscr{C}_{/ e *}(P)=(P/e)^*$.\smallskip

			\item \textbf{Intersection with a fixed polymatroid:} 
			for any polymatroid $P_0$ on $E$ with cage $\alpha$ define the polymatroid $\mathscr{C}_{\wedge P_0}$ on $E \sqcup E^*$   by
			\[
			\mathscr{C}_{\wedge P_0}\coloneqq \mathscr{C}_{\mathrm{id}}\wedge_{(\alpha,\alpha)} (P_0 \oplus B_{\alpha}).
			\]
			Then $\mathscr{C}_{\wedge P_0*}(P)=(P\wedge_\alpha P_0)^*$.\smallskip

			\item \textbf{Union with a fixed polymatroid:} 
			for any polymatroid $P_0$ on $E$ with cage $\alpha$ define the polymatroid $\mathscr{C}_{\vee P_0}$ on $E \sqcup E^*$   by
			\[
			\mathscr{C}_{\vee P_0}\coloneqq \mathscr{C}_{\mathrm{id}}\vee_{(\alpha,\alpha)} (P_0 \oplus  P_{0,\alpha}),
			\]
			where $P_{0, \alpha}$ denotes the polymatroid of rank 0 with cage $\alpha$. 
			Then $\mathscr{C}_{\vee P_0*}(P)=(P\vee_\alpha P_0)^*$.\smallskip

			\item \textbf{Constant Map}: 
			let $E_1,E_2$ be finite sets. For any polymatroid $P_0$ on $E_2$ with cage  $\beta$, define 
			\begin{align*}
				\mathscr{C}_{P_0} \coloneqq \mathbf{B}_{\alpha}\oplus P_0.
			\end{align*}		
			Then $\mathscr{C}_{P_0*}(P)=P_0$ for every polymatroid $P$ on $E_1$ with cage $\alpha$.		
		\end{enumerate}
	\end{theorem}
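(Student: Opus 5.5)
We would reduce each item of the statement to the corresponding matroid statement in \autoref{thm:matroid example} by means of multisymmetric lifts. The reduction rests on three facts. First, by \autoref{prop:commute with lift}, $\widetilde{\mathscr{C}_*(P)}\cong \widetilde{\mathscr{C}}_*(\widetilde{P})$. Second, by \autoref{lift}, the multisymmetric lift is a bijection between caged polymatroids and multisymmetric matroids. Third, the lift commutes with direct sum, deletion, intersection (\autoref{prop:multisymmetric lift and intersection}) and union. Hence for each item it suffices to identify $\widetilde{\mathscr{C}}$ with a matroid correspondence already treated in \autoref{thm:matroid example}, possibly modified by a permutation. The target polymatroid must then lift to the corresponding matroid.

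For (1), we must check that $\widetilde{\mathscr{C}_{\mathrm{id}}}$ computes the identity in the sense of \autoref{thm:matroid example}(1) on multisymmetric matroids. The lift of $\mathscr{C}_{\mathrm{id}}$ with cage $(\alpha,\alpha)$ has bases $S\sqcup T^*$ with $|S\cap E^i|+|T^*\cap E^{i*}|=\alpha_i$ for every $i$. This is the matroid $\mathscr{C}_{\mathrm{id}}$ on $\widetilde E\sqcup\widetilde E^*$ composed with a blockwise permutation $\tau$ of $\widetilde E^*$. That $\tau$ acts trivially on multisymmetric matroids, and a direct basis computation shows the two correspondences agree on multisymmetric inputs. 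By \autoref{thm:matroid example}(2) we get $\widetilde{\mathscr{C}_{\mathrm{id}*}(P)}=\tau(\widetilde P)^*=\widetilde P^{\,*}$, and injectivity of the lift then gives $\mathscr{C}_{\mathrm{id}*}(P)=P^*$.

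Items (2), (3), (4) and (7) should follow the same pattern.
\begin{itemize}
\item For (2), the lift of $\mathscr{C}_\sigma$ is a blockwise permutation correspondence.
\item For (3) and (4), deletion and contraction of $e^*$ from the caged polymatroid lift to deleting or contracting the whole block $E^{e*}$. We then apply \autoref{thm:matroid example}(3),(4) repeatedly, or compose via \autoref{prop:composition}.
\item For (7), the lift of $\mathbf{B}_\alpha$ is Boolean by \autoref{ex:boolean_poly}, so \autoref{thm:matroid example}(8) applies.
\end{itemize}

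For (5) and (6), we would use the compatibility of lift with intersection and union to write $\widetilde{\mathscr{C}_{\wedge P_0}}=\widetilde{\mathscr{C}_{\mathrm{id}}}\wedge(\widetilde{P_0}\oplus\mathbf{B})$ and $\widetilde{\mathscr{C}_{\vee P_0}}=\widetilde{\mathscr{C}_{\mathrm{id}}}\vee(\widetilde{P_0}\oplus U_0)$. We then invoke \autoref{thm:matroid example}(6),(7) together with $\widetilde{P\wedge_\alpha P_0}=\widetilde P\wedge\widetilde{P_0}$ and the analogous identity for unions.

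Alternatively, each item can be checked directly on bases, mimicking the proofs of (5) and (7) in \autoref{thm:matroid example}. There the bases of $(P\oplus\mathbf{B}_\beta)\wedge\mathscr{C}$ are the minimal elements of $\{(\beta_P\cap_\alpha\gamma,\ \alpha-\gamma)\}$. Minimality forces $\gamma\le\beta_P$, so the bases are $(\gamma,\beta_P-\gamma)$ with $\gamma\le\beta_P$. Deleting $E_1$ then yields multisets of the form $\beta_P-\gamma$, and the minimal ones, at $\gamma=\beta_P$... wait, deletion keeps all of $\{\beta_P-\gamma\}$ restricted appropriately. I would expect the main obstacle to lie exactly here: tracking the precise meaning of deletion and rank in the polymatroid setting, so that the result is exactly $\mathcal{B}_P$ on $E^*$. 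The lifting argument avoids this bookkeeping, which is why I would make it the primary route. The one point needing genuine care there is checking that the blockwise permutation is harmless on multisymmetric matroids.
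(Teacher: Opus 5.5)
\textbf{Gap in the lifting route.} The primary route breaks at its central step. You correctly describe the bases of $\widetilde{\mathscr{C}_{\mathrm{id}}}$ as the sets $S\sqcup T^*$ with $|S\cap E^i|+|T^*\cap E^{i*}|=\alpha_i$. But this is not the matroid $\mathscr{C}_{\mathrm{id}}$ on $\widetilde E\sqcup\widetilde E^*$ twisted by one blockwise permutation $\tau$. Its basis set is the union $\bigcup_\tau\mathcal{B}_{\mathscr{C}_\tau}$ over \emph{all} blockwise permutations.

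Already for $n=1$, $\alpha=(2)$, $\widetilde E=\{a,b\}$, one gets $\widetilde{\mathscr{C}_{\mathrm{id}}}=U_{2,4}$ with six bases. Each $\mathscr{C}_\tau$ has four bases and is not even multisymmetric. So \autoref{thm:matroid example}(2) does not apply, and the two correspondences genuinely differ. On the matroid with unique basis $\{a\}$, $\mathscr{C}_{\mathrm{id}*}$ returns the matroid with unique basis $\{a^*\}$, while $(\widetilde{\mathscr{C}_{\mathrm{id}}})_*$ returns $U_{1,2}$ on $\{a^*,b^*\}$. Hence the ``direct basis computation showing the two agree on multisymmetric inputs'' that you defer is the entire content of item (1).

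The same mismatch recurs in (2)--(6). The lifts of $\mathscr{C}_\sigma$, $\mathscr{C}_{\setminus e}$, $\mathscr{C}_{/e}$, $\mathscr{C}_{\wedge P_0}$ and $\mathscr{C}_{\vee P_0}$ are symmetrizations of the matroids in \autoref{thm:matroid example}, not those matroids, so quoting items there is unjustified. Only (7) works as written, since $\widetilde{\mathbf{B}_\alpha\oplus P_0}=\mathbf{B}_{\widetilde{E_1}}\oplus\widetilde{P_0}$ exactly.

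To repair the route you would need a lemma along these lines. If $\mathcal{B}_{\mathscr{D}}=\bigcup_\tau\tau(\mathcal{B}_{\mathscr{D}_0})$, with $\tau$ ranging over blockwise permutations of the target blocks, then $\mathcal{B}_{\mathscr{D}_*(M)}=\bigcup_\tau\tau(\mathcal{B}_{\mathscr{D}_{0*}(M)})$. This holds because $M\oplus\mathbf{B}$ is $\tau$-invariant. So all translated intersections have the same rank, and the minimal sets (resp.\ maximal projections) of the union are the union of those of the pieces. You would also have to check, item by item, that each lift is such a symmetrization; for (6) this needs the same rank argument for unions. The lift therefore does not avoid bookkeeping; it moves it somewhere harder.

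\textbf{The direct route.} The paper's omitted proof is the direct basis computation you started as an alternative. It goes through once deletion is read correctly. $Q\setminus E_1$ is the restriction of the rank function, so its bases are the projections $\pi_{E_2}(\beta)$, $\beta\in\mathcal{B}_Q$, of maximum size $\rk_Q(E_2)$.

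In (1), every element $\big((\beta_P+\gamma-\alpha)_+,\alpha-\gamma\big)$ of the collection has size at least $|\beta_P|=\rk P$. So the bases of the intersection are exactly $(\gamma,\beta_P-\gamma)$ with $\mathbf{0}\le\gamma\le\beta_P$. The maximal projections to $E^*$ occur at $\gamma=\mathbf{0}$, which gives $\mathcal{B}_P$ on $E^*$.

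The remaining items follow the same pattern.
\begin{itemize}
\item Items (2) and (3) are the same computation, with $\sigma$ inserted, or with the $i$-th coordinate of the $E^*$-part forced to be $0$.
\item For (4), work from the definition $\mathscr{C}_{\mathrm{id}}/e^*$, whose bases have $\gamma_i=0$; the printed list just copies that of (3). Minimality then forces $(\beta_P)_i=\rk_P(\{i\})$, which yields $P/e$.
\item Item (5) follows from (1) by commutativity and associativity of $\wedge_\alpha$ (transported from matroids through the lift), together with $(P\oplus\mathbf{B}_\alpha)\wedge(P_0\oplus\mathbf{B}_\alpha)=(P\wedge_\alpha P_0)\oplus\mathbf{B}_\alpha$.
\item Item (6) mimics the matroid union computation.
\end{itemize}
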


	\section{Relation with symbols of linear operators}\label{sec:relation with symbols}
	In this section, we show that polymatroid correspondences are discrete analogues of symbols of linear operators. Then, we will deduce some realizability properties from the theory of volume polynomials. 
	
	First, we recall the definition of the symbol of a linear operator. Given $\alpha\in \Z_{\geq 0}^m$, we define
	\[
	\R[x]_{\leq \alpha}\coloneqq \operatorname{span}(x^\gamma)_{\gamma\leq \alpha}.
	\]
	Given $\alpha\in \Z_{\geq 0}^m$, $\beta\in \Z_{\geq 0}^n$, and a linear operator $T: \R[x]_{\leq \alpha}\to \R[y]_{\leq \beta}$, its \emph{symbol}, denoted by $\sym(T)$, is defined by
	\[
	\sym(T)\coloneqq \sum_{\gamma\leq \alpha}T(x^{[\gamma]})x^{[\alpha-\gamma]}
	\]
	where we use the divided power notation $x^{[\gamma]}\coloneqq \frac{x^{\gamma}}{\gamma!}=\prod_{1\leq i\leq m}\frac{x^{\gamma_i}}{\gamma_i!}$. We note that our  definition of symbol is the one in \cite{GHM+25}, which is different from the one in \cite{BH20} by a global positive constant. Since eventually we are only concerned with the support of polynomials, using either definition will not affect our results.
	
	Given $\alpha\in \Z_{\geq 0}^m$ and $f\in\R[x]_{\leq \alpha}$, the polynomial $f$ can be written uniquely as 
	\[
	f=\sum_{\gamma\leq \alpha}f_\gamma x^{[\gamma]}\in \R[x]_{\leq \alpha},
	\]
	where $f_\gamma\in \R$. The \emph{dual of $f$ with respect to $\alpha$}, denoted by $f^\vee$, is defined as
	\[
	f^\vee=\sum_{\gamma\leq \alpha}f_{\alpha-\gamma}\partial_x^{\gamma} \in \R[\partial_x]_{\leq \alpha}. 
	\]
	Notice that by definition, 
	\begin{equation}\label{eq:easy1}
		f^\vee(x^{[\alpha]})=f.
	\end{equation}
	Moreover, for any $f\in\R[x]_{\leq \alpha}$ we have the identity
	\begin{equation}\label{eq:easy2}
		f=\sum_{\gamma\leq \alpha}\partial_x^\gamma (f)\vert_{x=0}\cdot x^{[\gamma]}.
	\end{equation}
	%where $\big(x^{[\alpha-\gamma]}\big)^\vee$ is the dual with respect to $\alpha$. 
	\begin{lemma}\label{lemma:symbol to operator}
		The linear operator $T: \R[x]_{\leq \alpha}\to \R[y]_{\leq \beta}$ is uniquely determined by its symbol. More precisely, given $\sym(T)\in \R[x, y]_{\leq (\alpha, \beta)}$, let $\sym(T)^\vee\in \R[\partial_x, \partial_y]_{\leq (\alpha, \beta)}$ be the dual of $\sym(T)$ with respect to $(\alpha, \beta)$. Then, for any $f\in \R[x]_{\leq \alpha}$, 
		\[
		T(f)=\sym(T)^\vee(f\cdot y^\beta)|_{x=0}\in \R[y]_{\leq \beta}.
		\]
	\end{lemma}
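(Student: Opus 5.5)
By linearity it suffices to verify the formula on monomials. I would take $f=x^{[\gamma]}$ for $\gamma\leq\alpha$ and compute both sides. Uniqueness then follows, since the right-hand side depends only on $\sym(T)$.

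First, write the symbol in divided-power form. Expand $T(x^{[\gamma]})=\sum_{\delta\leq\beta} t_{\gamma,\delta}\, y^{[\delta]}$. Then
\[
\sym(T)=\sum_{\gamma\leq\alpha}\sum_{\delta\leq\beta} t_{\gamma,\delta}\, x^{[\alpha-\gamma]}y^{[\delta]}.
\]
So the coefficient of $x^{[\alpha-\gamma]}y^{[\delta]}$ is $t_{\gamma,\delta}$. By the definition of the dual with respect to $(\alpha,\beta)$, the coefficient of $x^{[\mu]}y^{[\nu]}$ gets paired with $\partial_x^{\alpha-\mu}\partial_y^{\beta-\nu}$. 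Hence
\[
\sym(T)^\vee=\sum_{\gamma,\delta} t_{\gamma,\delta}\,\partial_x^{\gamma}\partial_y^{\beta-\delta}.
\]

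Next, apply this operator to $x^{[\gamma']}y^{\beta}$ and set $x=0$. I read $y^\beta$ in the statement as the divided power $y^{[\beta]}$, consistent with \eqref{eq:easy1}. If it is meant literally, an extra factor of $\beta!$ appears and must be absorbed; I would check the normalization here.

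The $x$-part is $\partial_x^{\gamma}x^{[\gamma']}|_{x=0}$. This equals $1$ if $\gamma=\gamma'$ and $0$ otherwise, which is the identity \eqref{eq:easy2} applied to monomials. The $y$-part is $\partial_y^{\beta-\delta}y^{[\beta]}=y^{[\delta]}$. Combining the two parts gives
\[
\sum_\delta t_{\gamma',\delta}\,y^{[\delta]}=T(x^{[\gamma']}),
\]
which is the claimed formula for $f=x^{[\gamma']}$.

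The only real obstacle is bookkeeping. One must match the index conventions in the dual: the coefficient of $x^{[\alpha-\gamma]}$ becomes $\partial_x^{\gamma}$, so that the $x$-derivative exactly extracts the $\gamma$-coefficient of $f$. One must also pin down the normalization of $y^\beta$ versus $y^{[\beta]}$. Once these are fixed, the computation is a direct check on monomials followed by linear extension.
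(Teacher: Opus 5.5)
Your proof is correct and is essentially the paper's argument. The paper also writes $\sym(T)^\vee=\sum_{\gamma\leq\alpha}T(x^{[\gamma]})^\vee\,\partial_x^\gamma$, lets $\partial_x^\gamma(\cdot)|_{x=0}$ extract the coefficients of $f$ via \eqref{eq:easy2} (your monomial-by-monomial check, packaged by linearity), and finishes with \eqref{eq:easy1}; your normalization caveat is well founded, since \eqref{eq:easy1} literally requires $y^{[\beta]}$, and with $y^\beta$ as written one obtains $\beta!\,T(f)$, which is harmless for the later support argument.
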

	\begin{proof}
		By the definition of taking dual, we have
		\begin{equation}\label{eq:sym1}
			\sym(T)^\vee=\sum_{\gamma\leq \alpha}T\big(x^{[\gamma]}\big)^\vee\cdot \big(x^{[\alpha-\gamma]}\big)^\vee=\sum_{\gamma\leq \alpha}T\big(x^{[\gamma]}\big)^\vee \cdot \partial_x^\gamma,
		\end{equation}
		where the first dual on the right-hand side is with respect to $\beta$ and the second is with respect to $\alpha$. Therefore,
		\begin{align*}
			\sym(T)^\vee(f\cdot y^\beta)|_{x=0}&=\sum_{\gamma\leq \alpha}\left(T\big(x^{[\gamma]}\big)^\vee \cdot \partial_x^\gamma\right)(f\cdot y^\beta)\Big\vert_{x=0}\\
			&=\sum_{\gamma\leq \alpha}\big(\partial_x^\gamma(f)\vert_{x=0}\big)\cdot T\big(x^{[\gamma]}\big)^\vee(y^\beta)\\
			&=T\left(\sum_{\gamma\leq \alpha}\big(\partial_x^\gamma(f)\vert_{x=0}\big)x^{[\gamma]}\right)^\vee(y^\beta)\\
			&=T(f)^\vee(y^\beta)\\
			&=T(f),
		\end{align*}
		where the first equality follows from \eqref{eq:sym1}, the third equality follows from the fact that both $T$ and taking dual are linear operators, the fourth equality follows from \eqref{eq:easy2}, and the last equality follows from \eqref{eq:easy1}. 
	\end{proof}
	
	\begin{theorem}\cite[Theorem 3.2]{BH20}\label{thm:BH symbol}
		If $\sym(T)$ is a Lorentzian polynomial, then $T$ maps Lorentzian polynomials to Lorentzian polynomials. 
	\end{theorem}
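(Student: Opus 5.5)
We would prove this statement, the Brändén--Huh theorem that operators with Lorentzian symbol preserve Lorentzian polynomials, by factoring $T$ into a short chain of elementary operations, each known to preserve Lorentzian polynomials, using the formula of \autoref{lemma:symbol to operator}:
\[
T(f)=\sym(T)^\vee(f\cdot y^\beta)|_{x=0}.
\]
It is more convenient to write the same output as a pairing. Expand $f=\sum_\gamma f_\gamma x^{[\gamma]}$ and $\sym(T)=\sum_\gamma T(x^{[\gamma]})x^{[\alpha-\gamma]}$. Then $T(f)$ is obtained from the product $f(x)\cdot \sym(T)(x,y)$ by extracting the coefficient of $x^{[\alpha]}$ in a suitably normalized way. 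Concretely,
\[
T(f)=\partial_x^{\alpha}\big(f(x)\,\sym(T)(x,y)\big)\big|_{x=0}
\]
up to the correct combinatorial weights, namely multinomial factors $\binom{\alpha}{\gamma}$.

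To make these weights harmless, we would first polarize. By \autoref{prop:polynomial polarization}, $f$ is Lorentzian if and only if $\Pi^\uparrow_{\leq\alpha}(f)$ is, and the symbol of the polarized operator is the polarization of $\sym(T)$. This reduces the problem to the multiaffine case $\alpha=\mathbf 1$, where $T(f)=\prod_i\partial_{x_i}\big(f(x)\,\sym(T)(x,y)|_{\text{multiaffine part}}\big)|_{x=0}$.

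The key steps, in order, are as follows.

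\begin{enumerate}
\item The product $f(x)\,\sym(T)(x,y)$ of Lorentzian polynomials in disjoint variables is Lorentzian.
\item We need to restrict to the multiaffine part in the $x$-variables, and this is the main obstacle. A naive product does not keep multiaffinity. Instead we would use the diagonal trick: introduce fresh variables $z$ and consider $f(z)\sym(T)(x,y)$. We then apply the operator $\prod_i(\partial_{x_i}+\partial_{z_i})$, or more precisely $\prod_i \partial_{x_i}\partial_{z_i}$ after the substitution that identifies each $x_i$ with $z_i$ on multiaffine terms. Products of the derivatives $\partial_{w}$ along nonnegative directions $w$ preserve Lorentzianity, since differentiation in a nonnegative direction does.
\item Setting variables to zero preserves Lorentzianity.
\end{enumerate}

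The delicate point is the pairing $\sum_\gamma a_\gamma b_{\mathbf 1-\gamma}$. It is realized by the linear substitution $z_i\mapsto x_i$ followed by $\partial_{x_i}^{2}$-type extraction. Such a substitution is a nonnegative linear change of variables, which preserves Lorentzian polynomials, and then $\partial^{\mathbf 1}_x$ picks exactly the complementary pairs.

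We expect this step, showing that the bilinear ``contraction'' $(g,h)\mapsto \partial_x^{\mathbf 1}(g(x)h(x,y))|_{x=0}$ restricted to multiaffine inputs is Lorentzian-preserving, to be hardest. This is because squares $x_i^2$ arise and must be killed without breaking the Lorentzian property. We would handle it by first checking M-convexity of supports, which follows from the polymatroid intersection results of \autoref{thmdfn:polymatroid intersection}. We would then check the Hessian signature condition on quadratic derivatives by reducing, via the already-preserved derivatives, to the case where the total degree is $2$, where it becomes a direct matrix computation.
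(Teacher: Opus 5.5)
The paper gives no proof of this statement; it quotes \cite[Theorem 3.2]{BH20}. Judged on its own, your skeleton is the right one: polarize to the multiaffine case, then get $T(f)$ from a product of $f$ with the symbol by differentiating and specializing. The polarization step works once made explicit. Set $\tilde T:=T\circ\Pi^\downarrow$, where $\Pi^\downarrow$ substitutes $x_{i,j}\mapsto x_i$. Then $T=\tilde T\circ\Pi^\uparrow_{\leq\alpha}$, and in the paper's normalization $\sym(\tilde T)=\alpha!\,\Pi^\uparrow_{\leq\alpha}\sym(T)$, with polarization applied to the $x$-variables only. So \autoref{prop:polynomial polarization} applies.

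The gap is in step 2 and in your last paragraph.
\begin{itemize}
\item Step 1 is misstated: $f(x)$ and $\sym(T)(x,y)$ share the $x$-variables, so this is not a product in disjoint variables.
\item The ``more precise'' diagonal trick (``identifying $x_i$ with $z_i$ on multiaffine terms'') is not a well-defined linear operation.
\item ``$\partial_{x_i}^2$-type extraction'' picks out the wrong coefficients. After $z\mapsto x$, a matched pair contributes $x^S x^{S^c}=x^{\mathbf 1}$, so the correct extraction is $\partial_x^{\mathbf 1}|_{x=0}$. Monomials containing some $x_i^2$ come only from mismatched pairs and are annihilated automatically; nothing has to be ``killed without breaking the Lorentzian property''.
\item Most importantly, your fallback for the step you call hardest would not go through. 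The $y$-derivatives commute with the contraction, but they only reduce you to $T(f)$ quadratic in $y$; $f$ and $\sym(T)$ keep arbitrary degree in $x$. Lowering the $x$-degree is not compatible with the contraction. For instance, Euler's identity $d\,T(f)=\sum_i T(x_i\partial_{x_i}f)$ only writes $T(f)$ as a sum of polynomials that are Lorentzian by induction, and such sums need not be Lorentzian.
\end{itemize}
So there is no reduction to a direct matrix computation. That Hessian condition is the whole content of the theorem, and M-convexity of the support does not address it.

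The missing step is the version you discarded. For $\alpha=\mathbf 1$, write $f(z)=\sum_S f_S z^S$ and $\sym(T)=\sum_S T(x^S)\,x^{S^c}$. Since $\prod_i(\partial_{z_i}+\partial_{x_i})=\sum_{A}\partial_z^{A}\partial_x^{A^c}$, applying it to $z^{S'}x^{S^c}$ and setting $x=z=0$ gives $1$ if $S'=S$ and $0$ otherwise. Hence
\[
T(f)=\prod_{i}(\partial_{z_i}+\partial_{x_i})\big(f(z)\,\sym(T)(x,y)\big)\Big|_{x=z=0}.
\]
Each ingredient preserves Lorentzian polynomials:
\begin{itemize}
\item $f(z)\sym(T)(x,y)$ is a product in genuinely disjoint variables, and it is Lorentzian by an elementary check. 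Its support is a product of M-convex sets. Each degree-two derivative is either a Lorentzian quadratic times a nonnegative constant, or $\ell_1(z)\ell_2(x,y)$ with $\ell_1,\ell_2$ nonnegative linear forms, whose Hessian has at most one positive eigenvalue.
\item Derivatives along nonnegative directions preserve Lorentzian polynomials, via nonnegative linear changes of variables \cite{BH20}.
\item Setting variables to zero preserves them too: faces of M-convex sets are M-convex, and Cauchy interlacing handles principal submatrices.
\end{itemize}
Equivalently, you can substitute $z\mapsto x$ and apply $\partial_x^{\mathbf 1}|_{x=0}$. With this, no Hessian computation is needed, and the argument is complete modulo these standard closure properties.
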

	
	We demonstrate that the polymatroid correspondence is the discrete analog of the symbol. 
	
	Let $T: \R[x]_{\leq \alpha}\to \R[y]_{\leq \beta}$ be a linear operator such that $\sym(T)$ is Lorentzian. Let $f\in \R[x]_{\leq \alpha}$ be a Lorentzian polynomial such that $T(f)\neq 0$. By  \autoref{thm:BH symbol}, $T(f)$ is also a Lorentzian polynomial. 
	Denote the supports of $f$, $T(f)$, and $\sym(T)$ by $\mathcal{B}_f$, $\mathcal{B}_{T(f)}$, and $\mathcal{B}_T$, respectively. By \cite[Theorem~2.25]{BH20}, $\mathcal{B}_f$, $\mathcal{B}_{T(f)}$, and $\mathcal{B}_T$ are bases of polymatroids, and we denote the corresponding polymatroids by $P$, $Q$, and $\mathscr{C}$. By definition, $P$, $Q$, and $\mathscr{C}$ have natural cages $\alpha$, $\beta$ and $(\alpha, \beta)$, respectively. 
	\begin{theorem}\label{prop:polymatroid correspondence as support}
		Under the above notations, let $\mathscr{C}_*$ be the caged polymatroid correspondence defined as in  \autoref{defn_poly correspondence}. Then, as caged polymatroids
		\[
		(Q, \beta)=(\mathscr{C}_*(P), \beta).
		\]
	\end{theorem}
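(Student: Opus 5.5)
The plan is to compute both sides explicitly in terms of supports and check that they agree. The left side is the support of $T(f)$, which I will get from \autoref{lemma:symbol to operator}. The right side is the set of bases of $\mathscr{C}_*(P)$, which I will get by unwinding \autoref{defn_poly correspondence}.

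\emph{Step 1: the support of $T(f)$.} Write $f=\sum_{\mu\le\alpha} f_\mu x^{[\mu]}$ and $\sym(T)=\sum c_{(\gamma,\nu)}x^{[\gamma]}y^{[\nu]}$. Then $\sym(T)^\vee=\sum c_{(\alpha-\gamma,\beta-\nu)}\partial_x^{\gamma}\partial_y^{\nu}$. We apply this operator to $f\cdot y^\beta$ and set $x=0$. The term $\partial_x^\gamma$ applied to $x^{[\mu]}$ and evaluated at $x=0$ survives only when $\gamma=\mu$. Likewise $\partial_y^{\beta-\nu'}y^\beta$ is a positive multiple of $y^{[\nu']}$ (after reindexing $\nu'=\beta-\nu$). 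Hence
\[
T(f)=\sum_{\nu\le\beta}\Big(\sum_{\mu} \kappa_{\nu}\, f_\mu\, c_{(\alpha-\mu,\nu)}\Big) y^{[\nu]}
\]
for some positive constants $\kappa_\nu$. Lorentzian polynomials have nonnegative coefficients, so no cancellation can occur. Therefore
\[
\mathcal{B}_{T(f)}=\{\nu \mid \exists\,\mu\in\mathcal{B}_P \text{ with } (\alpha-\mu,\nu)\in\mathcal{B}_T\}.
\]
The hypothesis $T(f)\ne0$ says exactly that this set is nonempty.

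\emph{Step 2: the bases of $(P\oplus\mathbf{B}_\beta)\wedge_{(\alpha,\beta)}\mathscr{C}$.} The bases of $P\oplus \mathbf{B}_\beta$ are $(\mu,\beta)$ with $\mu\in\mathcal{B}_P$. Intersecting $(\mu,\beta)$ with a basis $(\gamma,\nu)$ of $\mathscr{C}$ relative to the cage $(\alpha,\beta)$ gives $(\mu\cap_\alpha\gamma,\nu)$. By \autoref{thmdfn:polymatroid intersection} the minimal elements of this collection form an M-convex set, so they all have the same size; hence they are precisely the elements of minimum size.

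Since $\sym(T)$ is homogeneous, say of degree $d$, we have $|\gamma|+|\nu|=d$. The size of $(\mu\cap_\alpha\gamma,\nu)$ is $|\mu\cap_\alpha\gamma|+d-|\gamma|$. This is at least $d+|\mu|-|\alpha|=d+\rk P-|\alpha|$, with equality iff $\mu+\gamma\ge\alpha$ coordinatewise.

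The pairs with $\gamma=\alpha-\mu$ attain this bound. Such pairs exist by Step 1, because $T(f)\neq 0$. Hence the bases of the intersection are exactly the elements $(\mu+\gamma-\alpha,\nu)$ with $\mu+\gamma\ge\alpha$.

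\emph{Step 3: deletion of $E_1$.} The bases of the deletion are the $E_2$-parts of those intersection bases whose $E_1$-part is as small as possible. The smallest possible $E_1$-part is $0$, and it is attained, again because $T(f)\ne 0$. So the bases of $\mathscr{C}_*(P)$ are the $\nu$ with $(\alpha-\mu,\nu)\in\mathcal{B}_{\mathscr{C}}$ for some $\mu\in\mathcal{B}_P$. This is exactly the set found in Step 1.

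Both polymatroids carry the cage $\beta$, so the caged polymatroids coincide.

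\emph{Main obstacle.} The delicate point is Steps 2–3, namely identifying the minimal elements and the effect of deletion. This is where $T(f)\ne0$ is essential: without it the minimum is not attained on the slice with zero $E_1$-part, which is the rank jump discussed in the introduction. I would handle it through the cardinality bound above rather than by analysing inclusion-minimality directly.
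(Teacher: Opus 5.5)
Your proof is correct, but it takes a somewhat different route from the paper's.

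\textbf{The paper's route.} It works with the full polynomial $\sym(T)^\vee(f\cdot y^\beta)$ before setting $x=0$:
\begin{enumerate}
\item it identifies $\Supp(\sym(T)^\vee)$ with the bases of the dual caged polymatroid $\mathscr{C}^\vee$;
\item it reads off $\Supp(\sym(T)^\vee(f\cdot y^\beta))$ as the bases of the cap product $\mathscr{C}^\vee\frown(P\oplus\mathbf{B}_\beta)$;
\item it converts this to $\mathscr{C}\wedge_{(\alpha,\beta)}(P\oplus\mathbf{B}_\beta)$ using the identity $\beta\cap_\alpha\gamma=\beta-(\alpha-\gamma)$ from the proof of \autoref{thmdef:cap product};
\item it interprets setting $x=0$ as deleting $E_1$.
\end{enumerate}

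\textbf{Your route.} You compute only the $x=0$ part, that is, $T(f)$ itself, coefficient by coefficient. You then compare it with the intersection directly through the size bound
\[
|\mu\cap_\alpha\gamma|+|\nu|\ge \rk P+d-|\alpha|,
\]
and never invoke $\mathscr{C}^\vee$ or the cap product.

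\textbf{What each buys.} The paper's route makes visible the dictionary ``differential operator $\leftrightarrow$ cap product, dual symbol $\leftrightarrow$ dual caged polymatroid'', which is the point of that section. Yours is more self-contained, and it makes explicit the two places where nonnegativity of coefficients and $T(f)\neq 0$ are needed:
\begin{enumerate}
\item inclusion-minimal truncated differences coincide with exact ones, a step the paper's ``it follows from the definition of the cap product'' silently relies on;
\item deletion of $E_1$ picks out exactly the slice with zero $E_1$-part.
\end{enumerate}
As a by-product, you obtain in this nondegenerate setting the explicit description
\[
\mathcal{B}_{\mathscr{C}_*(P)}=\{\nu\mid \exists\,\mu\in\mathcal{B}_P \text{ with } (\alpha-\mu,\nu)\in\mathcal{B}_{\mathscr{C}}\},
\]
parallel to \eqref{eq:bases of nice correspondence}. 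The one fact you use without proof is that the bases of a polymatroid deletion are the $E_2$-parts of bases with smallest $E_1$-part. This is standard, and the paper uses it implicitly as well.
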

	\begin{proof}
		We denote by $\Supp(g)$ the support of the polynomial $g$. We need show that  $\Supp(T(f)) = \mathcal{B}_{\mathscr{C}_*(P)}$. 
		
		Since  $\Supp(\sym(T))= \mathcal{B}_{\mathscr{C}}$, we have   $\Supp(\sym(T)^\vee)=\mathcal{B}_{\mathscr{C}^\vee}$, where  $\sym(T)^\vee$ is the dual with respect to $(\alpha, \beta)$ and  $\mathscr{C}^\vee$ is the dual with respect  to  $(\alpha, \beta)$. 
		Since $\Supp(f)=\mathcal{B}_P$, we have that  $\Supp(f\cdot y^\beta)=\mathcal{B}_{P \oplus \mathbf{B}_{\beta}}$, and from the definition of polymatroid cap product in \autoref{thmdef:cap product}, it follows that %since $\sym(T)^\vee(f\cdot y^\beta)\neq 0$, 
		$\Supp(\sym(T)^\vee(f\cdot y^\beta))$ is the set of bases of  $\mathscr{C}^\vee\frown (P\oplus \mathbf{B}_{\beta})$. 
		
		In the proof of  \autoref{thmdef:cap product}, we  proved the equality 
		\[
		\mathscr{C}^\vee\frown (P\oplus \mathbf{B}_{\beta})= \mathscr{C}\wedge_{(\alpha, \beta)}(P\oplus \mathbf{B}_{\beta}).
		\]
		Thus,  $\Supp(\sym(T)^\vee(f\cdot y^\beta))$ is the set of bases of  $\mathscr{C}\wedge_{(\alpha, \beta)}(P\oplus \mathbf{B}_{\beta})$. Therefore,  $\Supp(\sym(T)^\vee(f\cdot y^\beta)|_{x=0})$ is  the set of bases of $\mathscr{C}\wedge_{(\alpha, \beta)}(P\oplus \mathbf{B}_{\beta})\setminus E_1$, and the latter  is equal to $\mathscr{C}_*(P)$  by definition. The conclusion now follows from 
		\autoref{lemma:symbol to operator}.
	\end{proof}
	
	\bibliographystyle{alpha} 
	\bibliography{refs}
\end{document}